\newcommand\sE{{\mathcal E}}
\newcommand\LL{{\mathbb L}}
\newcommand\la{\lambda}
\newcommand\ga{\gamma}
\newcommand\de{\delta}
\DeclareMathOperator{\Pic}{Pic}
\DeclareMathOperator{\Alb}{Alb}
\DeclareMathOperator{\divi}{div}
\DeclareMathOperator{\Tors}{Tors}
\newcommand{\CC}{\ensuremath{\mathbb{C}}}
\newcommand{\ZZ}{\ensuremath{\mathbb{Z}}}
\newcommand{\hol}{\ensuremath{\mathcal{O}}}
\newcommand{\PP}{\ensuremath{\mathbb{P}}}
\newcommand{\ra}{\ensuremath{\rightarrow}}
\def\eea{\end{eqnarray*}}
\def\bea{\begin{eqnarray*}}
\newcommand\dual{\mathrel{\raise3pt\hbox{$\underline{\mathrm{\thinspace d
\thinspace}}$}}}
\newcommand\qe{\ifhmode\unskip\nobreak\fi\quad $\Box$}       
\def\BOX{\hfill\lower.5\baselineskip\hbox{$\Box$}}
\newtheorem{theo}{Theorem}[section]
\newtheorem{remarkk}[theo]{Remark}
\newtheorem{defin}[theo]{Definition}
\newtheorem{lemma}[theo]{Lemma}
\newtheorem{example}[theo]{Example}
\newtheorem{rema}{Remark}[section]
\newtheorem{claim}[theo]{Claim}
\DeclareMathOperator{\Aut}{Aut}
\begin{document}

\title[Burniat surfaces I]{ Burniat surfaces I: fundamental groups
and moduli of primary Burniat surfaces. }
\author{I. Bauer, F. Catanese}

\thanks{The present work took place in the realm of the DFG Forschergruppe 790
"Classification of algebraic surfaces and compact complex manifolds".}

\date{\today}

\maketitle

\section*{Introduction}

In a recent joint paper (\cite{4names}) with Fritz Grunewald and 
Roberto Pignatelli we
constructed many new families of surfaces of general type with $p_g=0$,
hence we got interested about the current status of the classification
of such surfaces, in particular about the structure of their moduli spaces.

For instance, in the course of deciding which families were new and 
which were not new,
we ran into the problem of determining whether surfaces with $K^2 =4$
and with fundamental group equal to the one of Keum-Naie surfaces were
indeed Keum-Naie surfaces. This problem was solved in \cite{keumnaie},
where we showed that any surface homotopically equivalent to a
Keum-Naie surface is a Keum-Naie surface, whence we got a complete description
of a connected irreducible component of the moduli space of surfaces 
of general type.

We soon realized that similar methods would apply to the 'primary' 
Burniat surfaces,
the ones with $K^2 = 6$; hence we got interested about the components 
of the moduli space
containing the Burniat surfaces.

This article is the first  of  a series of  articles devoted to the 
so called Burniat surfaces.
These are several families of surfaces of general type with
$p_g = 0$, $K^2 = 6,5,4,3,2$, first constructed by P. Burniat in
\cite{burniat} as  'bidouble covers' (i.e.,  $(\ZZ / 2 \ZZ)^2$ Galois 
covers) of the plane $\PP^2$
branched on  certain configurations of nine lines.

These surfaces were later considered by Peters in \cite{peters}, who
gave an account of Burniat's construction in the modern language of 
double covers.
He missed however one of the two families with $K^2 = 4$,
the 'non nodal' one.
He also calculated (ibidem) the torsion group $H_1(S, \ZZ)$ for Burniat's
surfaces (observe that a surface of general type with $p_g = 0$ has 
first Betti number $b_1 = 0$).
He asserted  that
$H_1(S,\ZZ) \cong (\ZZ / 2
\ZZ)^{K_S^2}$. This result is  however correct only  for $K^2 \neq 
2$, as we shall see.

  Later, following a suggestion by Miles Reid, another construction of 
these surfaces
was given  by Inoue in
\cite{inoue}, who constructed 'surfaces closely related to Burniat's 
surfaces' with a different
technique as $G^2:=(\ZZ / 2 \ZZ)^3$-quotients of a $G^2$-invariant 
hypersurface $\hat{X}$
of multidegree
$(2,2,2)$ in a product of three elliptic curves.

Another description of the Burniat surfaces as 'singular bidouble 
covers' was later given in
\cite{sbc}, where also other examples were proposed of 'Burniat type 
surfaces'. These however
turn out to give no new examples.

The important feature of the Burniat surfaces $S$ is that their bicanonical map
is a bidouble cover of a normal Del Pezzo surface of degree $K_S^2$ 
(obtained as the anticanonical model
of the blow up of the plane in the points of multiplicity at least 3 of the
divisor given by the union of the lines of the configuration).

Burniat surfaces with $K^2 = 6$ were studied from this point of view
by Mendes-Lopes and Pardini in \cite{mlp}.

Although Burniat surfaces had been known for a long time,
we  found that their most important properties were yet
to be discovered, and we devote  two articles
to show in particular that the four families of Burniat surfaces,
the ones with $ K^2=6,5$ respectively, and the two ones
with $K^2= 4$ (the nodal and the non nodal one)
are irreducible connected components of the moduli space
of surfaces of general type.

Since  there is no reference known where it is proved that
Burniat's surfaces are exactly Inoue's surfaces, we start by  giving
  in the present paper a proof of this fact.

This is crucial in order to calculate the fundamental groups of Burniat's
surfaces with $K^2 = 6,5,4,3,2$. Our proof confirms the results 
stated by Inoue without proof
in his beautiful paper, except for $K^2 = 2$ where
  Inoue's claim turns out to be wrong.

Our proof combines the 'transcendental' description given by Inoue with
delicate algebraic calculations, which are based on  explicit algebraic
normal forms for the 2-torsion of elliptic curves,  described in the 
first section.

We first prove the following:

\begin{theo}
Let $S$ be the minimal model of a Burniat surface.

\noindent
i) $K_S^2 = 6$ $\implies$ $\pi_1(S) = \Gamma$, $H_1(S, \ZZ) =
(\ZZ / 2\ZZ)^6$;

\noindent
ii) $3 \leq K_S^2 \leq 5$ $\implies$ $\pi_1(S) = \mathbb{H} \oplus (\ZZ /
2 \ZZ)^{K^2-2}$,
$H_1(S, \ZZ) = (\ZZ / 2\ZZ)^{K^2}$;

\noindent
iii) $K^2 = 2$ $\implies$ $\pi_1(S) =  H_1(S, \ZZ) = (\ZZ / 2\ZZ)^3$.

\noindent
Here $\mathbb{H}$ denotes the quaternion group of order $8$,
while $\Gamma$ is  a group of affine transformations on $\CC^3$, 
explicitly described in section
\ref{fundgroup}.
\end{theo}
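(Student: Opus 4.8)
The plan is to reduce everything to the transcendental (Inoue) realization of the Burniat surfaces. In the primary case $K_S^2=6$ this writes $S$ as a free quotient $S=\hat X/G$, where $G\cong(\ZZ/2\ZZ)^3$ acts on a smooth $G$-invariant hypersurface $\hat X$ of multidegree $(2,2,2)$ in the abelian threefold $A=E_1\times E_2\times E_3$. Since $\hat X$ is a smooth ample divisor in a threefold, the Lefschetz hyperplane theorem gives $\pi_1(\hat X)\cong\pi_1(A)=\Lambda\cong\ZZ^6$, where $\Lambda=\Lambda_1\times\Lambda_2\times\Lambda_3$ is the period lattice. The \'etale $G$-cover $\hat X\to S$ then yields the extension
\[
1\to\Lambda\to\pi_1(S)\to G\to 1 .
\]
To identify $\pi_1(S)$ I would use that it acts on the universal cover $\CC^3$ of $A$ by affine transformations: $\Lambda$ acts by translations, and each generator of $G$ lifts to an affine involution that changes the sign of two coordinates and translates by a half-period, the precise half-periods being read off from the $2$-torsion normal forms of Section~1. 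The subgroup of $\mathrm{Aff}(\CC^3)$ so generated is $\Gamma$, giving $\pi_1(S)=\Gamma$; abelianizing the extension turns every translation into $2$-torsion and leaves $H_1(S,\ZZ)=(\ZZ/2\ZZ)^6$.

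For $3\le K_S^2\le 5$ the surface is obtained by degenerating the configuration, which in the Inoue picture forces the invariant hypersurface to acquire $6-K_S^2$ ordinary double points, so that $S$ is the minimal resolution of the nodal quotient $\hat X/G$. I would analyse each node by van~Kampen, using the local model $\CC^2/\{\pm 1\}$ (whose link is $\RR P^3$, so that the exceptional $(-2)$-curve carries the local $\ZZ/2\ZZ$) to record the relation it imposes. These relations, together with their $G$-orbits, are half-period identifications that collapse the free abelian part $\Lambda$, so that in contrast with the primary case $\pi_1(S)$ becomes finite. Tracking the surviving torsion, two of the lifted involutions $a,b$ end up satisfying $a^2=b^2=(ab)^2=z$ with $z$ a central element of order $2$ (a residual half-period), which is exactly a presentation of the quaternion group $\mathbb{H}$; the remaining independent half-periods split off as a factor $(\ZZ/2\ZZ)^{K^2-2}$. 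This yields $\pi_1(S)=\mathbb{H}\oplus(\ZZ/2\ZZ)^{K^2-2}$, and abelianizing (so that $z=1$ and $a,b$ become commuting involutions) gives $H_1(S,\ZZ)=(\ZZ/2\ZZ)^{K^2}$.

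The case $K^2=2$ is where I expect the real difficulty, and it is precisely where Inoue's stated answer is wrong. Here the configuration is maximally degenerate and the extra relations push the collapse one step further: the residual central translation $z$ becomes trivial and the two involutions $a,b$ come to commute, so the quaternionic relation breaks and $z$ survives instead as an independent order-$2$ generator. The outcome is the abelian group $(\ZZ/2\ZZ)^3$ rather than the naively expected $\mathbb{H}$—a difference already visible in homology, since $\mathbb{H}$ abelianizes to $(\ZZ/2\ZZ)^2$ whereas the true $H_1(S,\ZZ)=(\ZZ/2\ZZ)^3$, which is also why Peters' formula $H_1=(\ZZ/2\ZZ)^{K^2}$ fails exactly at $K^2=2$. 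Because the distinction between $\mathbb{H}$ and $(\ZZ/2\ZZ)^3$ turns on whether specific half-period translations are equal or independent and whether the two lifted involutions commute, it cannot be decided topologically and must be settled by the explicit algebraic $2$-torsion computations of Section~1. The bulk of the work, and the step most prone to error, is thus the exact determination of the extension class and of the commutators among the lifted involutions in each degeneration.
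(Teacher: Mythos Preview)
Your overall architecture matches the paper's: Lefschetz on $\hat X\subset E_1\times E_2\times E_3$ for the primary case, then for $K^2\le 5$ a quotient of $\Gamma$ by relations coming from the singular fibres, with the $K^2=2$ case settled by the explicit $2$-torsion algebra of Section~\ref{legendre}. But the middle step, where the quaternion group is supposed to emerge, is not correctly set up.

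First, the bookkeeping: over each $(1,1,1)$ point of the plane configuration $\hat X$ acquires \emph{four} nodes (a full $G^2$-orbit), not one, so the count is $4(6-K^2)$ rather than $6-K^2$. More seriously, the quotient $X=\hat X/G^2$ does not have nodes: its singularities are of type $\tfrac14(1,1)$, resolved by $(-4)$-curves, so your local model $\CC^2/\{\pm 1\}$ with its $(-2)$-curve and $\RR P^3$ link is describing the wrong space. The relations you need do not come from ``van Kampen at each node of $\hat X$'' (nodes do not change $\pi_1(\hat X)$, which stays $\ZZ^6$ by Brieskorn--Tjurina); they come from the \emph{non-freeness} of the $\Gamma$-action on the universal cover $\tilde X$. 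The paper invokes Armstrong's theorem: $\pi_1(X)=\Gamma/\mathrm{Tors}(\Gamma)$, where $\mathrm{Tors}(\Gamma)$ is the normal closure of all elements of $\Gamma$ having fixed points on $\tilde X$.

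The crucial observation you are missing is that among the seven nontrivial elements of $G^2$, only $\gamma_1\gamma_2\gamma_3$ can have fixed points on $E_1\times E_2\times E_3$; the other six act freely. Hence every generator of $\mathrm{Tors}(\Gamma)$ is of the form $\gamma_1\gamma_2\gamma_3\, t_\lambda$. Normalizing one fixed point to $z=\tfrac14\epsilon$ gives the relation $\gamma_1\gamma_2\gamma_3=1$, and the other three points in its $G^2$-orbit force $e_1=e_2=e_3$ (and $2\Lambda=0$). It is precisely this pair of relations, $\gamma_1\gamma_2\gamma_3=1$ and $\gamma_1^2=\gamma_2^2=\gamma_3^2$, that produces $\mathbb H$ (send $\gamma_1,\gamma_2,\gamma_3$ to $i,j,-k$); without isolating $\gamma_1\gamma_2\gamma_3$ as the unique stabilizer class, there is no visible reason why the quotient should be $\mathbb H\oplus(\ZZ/2\ZZ)^{K^2-2}$ rather than some other extension. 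The further drops for $K^2=4,3,2$ then come from comparing the $\hat\lambda_{z(j)}$ of the additional orbits, and for $K^2=2$ the decisive point (which you correctly anticipate must be algebraic) is that the three extra relations are linearly dependent in $\bigoplus e_i'(\ZZ/2\ZZ)$ but each carries a nontrivial component in the centre $\langle e_1\rangle$ of $\mathbb H$, so their sum kills that centre and abelianizes the group.
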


The main result of this article is however the following theorem:

\begin{theo}\label{homotopy1}
    Let $S$ be a smooth complex projective surface which is
homotopically equivalent to a primary Burniat surface.
Then $S$ is a Burniat surface.
\end{theo}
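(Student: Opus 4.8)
The plan is to exploit the fact that a homotopy equivalence preserves the fundamental group, the intersection form and the Chern numbers $K_S^2,\,e(S)$, and then to reconstruct the surface from an \'etale Galois cover governed by the group $\Gamma$. First I would record that $S$ has $K_S^2=6$, $\chi(\Oh_S)=1$, $p_g=q=0$ and $\pi_1(S)\cong\Gamma$; since $K_S^2>0$, $\chi(\Oh_S)=1$ and $b_1=0$, the surface $S$ is minimal of general type. Using the structure of $\Gamma$ (an affine group on $\CC^3$) I would single out the normal subgroup $\Gamma_0\cong\ZZ^6$ of index $8$ with $\Gamma/\Gamma_0\cong(\ZZ/2\ZZ)^3=:G$, and let $\hat S\to S$ be the associated \'etale $G$-cover. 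Then $\pi_1(\hat S)\cong\ZZ^6$, so $q(\hat S)=3$, and multiplicativity of the invariants in an \'etale cover gives $\chi(\Oh_{\hat S})=8$, $K_{\hat S}^2=48$, $p_g(\hat S)=10$; in particular $\hat S$ is again minimal of general type.

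Next I would study the Albanese map $\alpha\colon\hat S\to A:=\Alb(\hat S)$, where $A$ is an abelian threefold. Because $\pi_1(\hat S)\cong\ZZ^6$ is abelian, $\hat S$ admits no fibration onto a curve of genus $\ge 2$, which rules out a one-dimensional Albanese image; hence $\alpha$ is generically finite onto a surface, i.e. onto a divisor $W\subset A$. Since $\alpha$ induces the identification $\pi_1(\hat S)\cong\pi_1(A)$ coming from the homotopy equivalence, I expect $\alpha$ to be birational onto $W$, with the class $[W]\in H^2(A,\ZZ)$ pinned down by the (homotopy-invariant) cup-product structure. A Riemann--Roch computation on $A$ shows that a divisor of multidegree $(2,2,2)$ has $W^3=48=K_{\hat S}^2$, and adjunction ($K_A=0$) gives $K_W=\Oh_W(W)$; minimality of $\hat S$ together with the matching of $K^2$ then forces $\hat S\cong W$, a smooth $(2,2,2)$-divisor, i.e. exactly an Inoue hypersurface $\hat X$.

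The product decomposition of $A$ is where the explicit description of $\Gamma$ from Section \ref{fundgroup} enters. The group $G=\Gamma/\Gamma_0$ acts on $\hat S$ by deck transformations and hence on $A$ by affine automorphisms; reading off the linear parts from the affine normal form of $\Gamma$, these are the diagonal sign involutions $\mathrm{diag}(\pm 1,\pm 1,\pm 1)$. A nontrivial involution of this shape, with the prescribed fixed loci, forces the lattice defining $A$ to split as a direct sum of three rank-two sublattices, so that $A\cong E_1\times E_2\times E_3$ and $G$ acts precisely as Inoue's $(\ZZ/2\ZZ)^3$. Thus $\hat S=\hat X$ is a $G$-invariant hypersurface of multidegree $(2,2,2)$, and $S=\hat X/G$ is an Inoue surface; by the equivalence of Inoue's and Burniat's constructions established earlier in the paper, $S$ is a primary Burniat surface.

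The main obstacle I anticipate is the middle step: converting the purely topological input of the homotopy equivalence into the holomorphic conclusion that $\alpha$ is birational onto a divisor of class \emph{exactly} $(2,2,2)$, and that the image is smooth. Controlling the degree of the Albanese map and the precise divisor class, rather than merely its numerical self-intersection, requires care, as does verifying that the $G$-action descends to the expected Inoue action rather than to some other affine lift compatible with the same $\pi_1$. Here the explicit algebraic normal forms for the $2$-torsion of elliptic curves, set up in the first section, should make both the splitting of $A$ and the identification of the divisor class effective.
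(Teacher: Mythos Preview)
Your architecture matches the paper's: pass to the \'etale $(\ZZ/2\ZZ)^3$-cover $\hat S$, compute $q(\hat S)=3$, $K_{\hat S}^2=48$, $p_g(\hat S)=10$, and realize $\hat S$ as a $(2,2,2)$-hypersurface in a product of three elliptic curves via its Albanese map. The two points where your outline diverges from the paper are worth flagging. First, you propose to extract the product splitting $A\cong E_1\times E_2\times E_3$ from the $G$-action on the lattice; the paper instead gets it for free by passing through three intermediate \emph{double} covers $S_i\to S$ attached to index-two subgroups $\Gamma_i\lhd\Gamma$, computing $\Gamma_i^{\mathrm{ab}}$ explicitly to see $q(S_i)=1$ with $\Alb(S_i)=\sE_i'$, and then factoring $\hat S\to S_i\to\sE_i'$. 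This bypasses any lattice-splitting argument entirely.

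Second, the obstacle you identify---pinning down birationality of $\hat\alpha$ and the exact class of the image---is exactly where the paper's key idea sits, and your proposed workaround (``matching $K^2$ plus minimality forces $\hat S\cong W$ smooth'') does not work: rational double points on $W$ would leave $K^2$ unchanged, so equality of $K^2$ alone cannot give smoothness, nor does it by itself determine the degree of $\hat\alpha$ or the class of $W$. The paper's resolution is clean: for each pair $i\neq j$ the degree of $\pi_{ij}\circ\hat\alpha\colon\hat S\to\sE_i\times\sE_j$ equals the index of the image of $H^4(\sE_i\times\sE_j,\ZZ)\cong\wedge^4(\Lambda_i\oplus\Lambda_j)$ inside $H^4(\hat S,\ZZ)$, which is read off from the cohomology ring and is therefore a homotopy invariant; for an actual primary Burniat surface it equals $2$. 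This single observation gives both that $\hat\alpha$ is birational onto its image $Y$ and that $Y$ has multidegree $(2,2,2)$. The paper then controls the singularities of $Y$ not via $K^2$ but via $p_g$: the restriction sequence $0\to\hol_T\to\hol_T(Y)\to\omega_Y\to 0$ on $T=\sE_1\times\sE_2\times\sE_3$ yields $p_g(Y)=10=p_g(\hat S)$, and since $|\omega_Y|$ is base-point free this forces $Y$ to have at worst rational double points, which is what is needed to conclude.
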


We can then use this result to give an alternative, and less involved 
proof of the following result due to
Mendes-Lopes and Pardini (\cite{mlp}).

\begin{theo}\label{locmod1}
The subset  of the Gieseker moduli space corresponding to
primary Burniat surfaces is an irreducible connected component,
normal, unirational and   of dimension
equal to 4.
\end{theo}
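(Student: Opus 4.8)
The plan is to deduce all four assertions from a single local deformation computation, combined with the homotopy rigidity already established in Theorem~\ref{homotopy1}. I would organize everything around the explicit family of primary Burniat surfaces and its image in the Gieseker moduli space $\mathfrak{M}$ of minimal surfaces of general type with $K^2=6$ and $\chi=1$.

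First I would set up the parameter space. A primary Burniat surface is a bidouble cover of $\PP^2$ branched on the standard configuration of nine lines (three triples, one triple through each of three distinguished points, together with the prescribed further incidences). The configurations of this combinatorial type form an irreducible rational variety $\mathcal{C}$, and the group $\Aut(\PP^2)=PGL_3(\CC)$ acts on $\mathcal{C}$; the quotient is a rational parameter space $\mathcal{B}$ carrying a natural morphism $\mathcal{B}\to\mathfrak{M}$ whose image $\mathcal{M}_{\mathrm{Bur}}$ is the primary Burniat locus. Since $\mathcal{B}$ is irreducible and rational, $\mathcal{M}_{\mathrm{Bur}}$ is irreducible and unirational; and a direct count of the moduli of the nine lines minus $\dim PGL_3(\CC)=8$ yields four essential parameters, i.e.\ $\dim\mathcal{M}_{\mathrm{Bur}}=4$.

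Next I would carry out the local study of $\mathfrak{M}$ at a Burniat point $[S]$ by computing $H^1(S,\Theta_S)$ and $H^2(S,\Theta_S)$. Using the bidouble cover structure, both groups decompose into eigensheaf contributions for the $(\ZZ/2\ZZ)^2$-action, each expressible through sheaves on $\PP^2$ (or on the Del Pezzo surface obtained after blowing up the points of high multiplicity of the branch divisor). The aim is to prove $h^1(S,\Theta_S)=4$ and that the Kuranishi family is unobstructed. The cleanest route is to show that the four-dimensional family constructed above already realizes every first-order deformation, so that its Kodaira--Spencer map is an isomorphism; this simultaneously forces the base of the Kuranishi family to be smooth of dimension $4$, whence $\mathfrak{M}$ is normal at $[S]$, has dimension $4$ there, and $\mathcal{M}_{\mathrm{Bur}}$ is open in $\mathfrak{M}$.

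Finally I would upgrade $\mathcal{M}_{\mathrm{Bur}}$ to a connected component using Theorem~\ref{homotopy1}. Let $Z$ be the connected component of $\mathfrak{M}$ containing $[S]$. Any point of $Z$ corresponds to a surface deformation equivalent to $S$; by Ehresmann's theorem such a surface (or its minimal resolution) is diffeomorphic to $S$, hence homotopy equivalent to a primary Burniat surface, and Theorem~\ref{homotopy1} then forces it to be a Burniat surface, which in particular is smooth with $K$ ample. Thus $Z\subseteq\mathcal{M}_{\mathrm{Bur}}$, while the irreducibility of $\mathcal{M}_{\mathrm{Bur}}$ gives the reverse inclusion $\mathcal{M}_{\mathrm{Bur}}\subseteq Z$; hence $\mathcal{M}_{\mathrm{Bur}}=Z$ is a connected component. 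The main obstacle is the deformation computation of the third paragraph: proving $h^1(S,\Theta_S)=4$ with vanishing obstructions---equivalently, that $S$ admits no deformations beyond those moving the nine lines---requires a delicate eigensheaf-by-eigensheaf vanishing analysis for the bidouble cover, and it is this step on which normality, the dimension, and openness all rest.
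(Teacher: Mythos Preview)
Your overall architecture is correct and matches the paper's: irreducibility and unirationality from an explicit rational parameter space, openness and normality from a local deformation computation, and closedness from Theorem~\ref{homotopy1}. The argument for closedness in your last paragraph is essentially the paper's, modulo the minor care needed because points of the Gieseker moduli space are canonical models (possibly with rational double points); your parenthetical ``or its minimal resolution'' is exactly the right fix, and since a primary Burniat surface has $K$ ample and $K^2=6$, the minimal resolution coincides with the canonical model and you land back in $\mathcal{M}_{\mathrm{Bur}}$.

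Where you genuinely diverge from the paper is in the deformation computation. You propose to work on the Burniat side, decomposing $H^i(S,\Theta_S)$ into $(\ZZ/2\ZZ)^2$-eigenpieces on the Del Pezzo base and proving the required vanishings character by character. This is precisely the Mendes-Lopes--Pardini approach, and the paper explicitly sets out to replace it. Instead, the paper exploits the Inoue description: since $\hat{X}\to S$ is \'etale with group $G^2$, the Kuranishi space of $S$ is the $G^2$-invariant part of that of $\hat{X}$, and for $\hat{X}$, a smooth ample $(2,2,2)$-divisor in an abelian threefold $T$, smoothness of the Kuranishi family follows in one line from the normal bundle sequence together with the fact that cup product with the polarization $c_1(\hat{X})$ gives a surjection $H^1(\Theta_T)\to H^2(\hol_T)$. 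The $G^2$-invariant linear system is then computed directly via Lemma~\ref{h++} to be a $\PP^1$, which combined with the three moduli of the elliptic curves gives the four-dimensional family. So your route works but is the harder one you yourself flag as ``delicate''; the paper's point is that passing to the \'etale cover in the product of elliptic curves makes the deformation theory essentially trivial.
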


In \cite{mlp} openness is shown (using Burniat's description of a 
primary Burniat surface)
by standard local deformation theory of bidouble covers. We  give an 
alternative proof of this
result  using Inoue's
description.

For
the closedness, Mendes Lopes and Pardini use
their characterization of primary Burniat surfaces as exactly those surfaces
with $p_g = 0$, $K^2 = 6$ such that
the bicanonical map has degree $4$.

Our proof is much less involved. It only uses the description of the
fundamental group as an affine group of
transformations of $\CC^3$.

In the second article we shall show that the Burniat surfaces with
$K^2 = 5$ yield  an irreducible connected
component of dimension $3$ in the moduli space of surfaces of general
type. Instead,  there are two
different configurations in the plane giving Burniat surfaces with
$K^2=4$. We shall show that, in fact, both yield
an irreducible connected component of dimension $2$ in the moduli
space of surfaces of general type.

This is interesting, since it follows that the bicanonical map of $S$
is a bidouble cover of a Del Pezzo surface of degree $K^2_S$
for all the surfaces in the connected component.

There is only one Burniat surface with $K^2 = 2$, and since its 
fundamental group is
$(\ZZ / 2\ZZ)^3$, it turns out to be a surface in the 6 dimensional family of
standard Campedelli surfaces (\cite{miyaoka}), for which the bicanonical
map is then a degree 8 covering of the plane.

We analyse it briefly in the last section.
In fact, at the moment of completing the paper we became aware of the 
article \cite{kulikov}
where the author had already pointed out and corrected the errors of 
\cite{inoue} and \cite{peters}
  on the fundamental group and the first homology of
the Burniat surface with $K^2 = 2$.

\section{The Legendre and other normal forms for 2-torsion of elliptic curves
}\label{legendre}

This section reviews  classical mathematics which will be
reiteratedly used in the sequel.

The Legendre form of an elliptic curve is given by an equation of the form

$$ y^2 = (\xi^2 -1) ( \xi^2 - a^2) .$$

It yields a curve $\sE'$ of genus $1$ as a double  cover of $\PP^1$ branched on
the 4 points $  \xi = \pm 1$, $ \xi = \pm a$.

These 4 points yield 4 points on $\sE'$ , $P'_{1},  P'_{-1}, P'_{a},
P'_{-a},  $ which correspond to the
2-torsion points, once any of them is fixed as the origin, as we
shall more amply now illustrate.

We consider  now 3 automorphism of order 2 of $\sE'$ defined by:
$$ g'_1 (\xi,y) : =  (-\xi, -y), \ g'_2 (\xi,y) : =  (\xi, - y), \ 
g'_3 (\xi,y) :
=  (-\xi, y).$$
We get in this way an action of $(\ZZ / 2 \ZZ)^2$ on $\sE'$ such that
the quotient is $\PP^1$,
with coordinate $ u : = \xi^2 $.

Clearly the quotient by $g'_2$ is the original $\PP^1$ with
coordinate $\xi$, hence
$g'_2$ corresponds to multiplication by $-1$ on the elliptic curve,
once we fix one of the above points as the origin.

    The quotient of $\sE'$ by $g'_3$ is
instead  the smooth curve of genus $0$, given by the conic $ y^2 =
(u-1)(u-a^2)$.

What is more interesting is the quotient of $\sE'$ by $g'_1$: the
invariants are $u$ and $r : = \xi y$,
thus we obtain as quotient the elliptic curve
$$  \sE : = \{ r^2 = u (u-1) (u-a^2) \}$$
in Weierstrass normal form.

This shows that $g'_1$ is the translation by a 2-torsion element $\eta'$.

By looking at the action of
$g'_1$ on the 4 above points, we see that $\eta'$ is the class of
the degree zero divisor $ [ P'_{1}]-  [P'_{-1}]$.

In other words, the
divisor classes of degree $0$
$$ \eta'  : = [ P'_{1}]-  [P'_{-1}] =  [P'_{a}] - [  P'_{-a}] , \
\eta '' : = [ P'_{1}]-  [P'_{a}] =  [P'_{-1}] - [  P'_{-a}] $$
generate $Pic^0 (\sE')[2] \cong (\ZZ / 2 \ZZ)^2$.

We can now also understand that the automorphism $g'_3$ , which is
the product $g'_1 g'_2 = g'_2 g'_1$,
has as fixed points the 4 points lying over $ \xi = 0$ and $ \xi=
\infty$. These correspond to the 4-torsion points
whose associated translation has as square the translation by the
2-torsion point $\eta'$.

More important, we want to give now a nicer form for the group of
translations of order 2 of an elliptic curve
(this leads  to the theory of 2-descent on an elliptic curve).
This form will be used in the sequel, but in another coordinate system
for the line $\PP^1$ with
coordinate $\xi$.

To this purpose, let us consider the curve $C$ defined by

$$  v^2 = (\xi^2 -1) , \ w^2 =  (\xi^2 -a^2).$$

We shall show that this curve is the same elliptic curve $\sE$ which
we had above.

In fact, setting $ y : = vw $, we see that we obtain $C$ as a double cover
of $\sE'$, which is unramified
(as we see by calculating the ramification of the $(\ZZ / 2
\ZZ)^2$-Galois cover of
$\PP^1$ with coordinate $\xi$).

The transformations of order 2
$$ g_1 : (\xi,v,w) \mapsto (\xi, -v, -w), \ f_2 : (\xi,v,w) \mapsto 
(-\xi, v, -w),$$
$$ f_3 : (\xi,v,w) \mapsto (-\xi, -v, w)$$
generate a group $ H  \cong (\ZZ / 2 \ZZ)^2$ such that the quotient
curve is the elliptic curve
$\sE$, since the invariants are $ \xi^2 = u, \ v^2 = u-1, \  w^2 = u - a^2,
  \ \xi vw= \xi y = r$.

The  quotients  by the above involutions are respectively  $\sE'$,
the elliptic curve
$$ \sE'' : = \{ t^2 =  (v^2 + 1) ( v^2 + 1 - a^2) \}  $$
(where we have set $ t : = \xi w$), and the elliptic curve
$$ \sE''' : = \{ s^2 =  (w^2 + a^2) ( w^2 +  a^2 - 1) \}  $$
(where we have set $ s : = \xi v$).

The first conclusion is that $C$ is isomorphic to $\sE$, the group
$H$ is the group of translations by the
2-torsion points of $\sE$, whereas the quotient map $ C \ra \sE = C/H$ is
multiplication by 2 in the elliptic curve $C
\cong \sE$.

We have another group $G \cong (\ZZ / 2 \ZZ)^2$ acting on $C \cong \sE$, namely
the one with quotient  the $\PP^1$ with coordinate $\xi$. Here, we set
$$ g_2 :  (\xi ,v,w) \mapsto (\xi, -v, w), \ g_3 : = g_1 g_2 = g_2 g_1 :
(\xi,v,w) \mapsto (\xi, v, -w).$$

Again, $g_1$ corresponds to translation by a 2-torsion element
$\eta$, while we view $g_2$
as multiplication by $ -1$. The fixed points of $g_2$ are the points
with $v=0$,
i.e., the 4 points with $ v=0, \ \xi = \pm 1, \  w = \pm \sqrt {1 - a^2}$.

Translation by $\eta$ then acts on them  simply by multiplying their
$w$ coordinate by $-1$.

An important observation is that the covering $ C \ra \PP^1$,
where $\PP^1$ has coordinate $u$, is a
$(\ZZ / 2 \ZZ)^3$-Galois cover of
the $\PP^1$ with coordinate $u$ which is the maximal Galois covering of $\PP^1$
branched on the
4 points $0,1,a^2, \infty$ and with group of the form $(\ZZ / 2 \ZZ)^m$.

It would be nice if  also for surfaces one could treat such Galois
covers with group $(\ZZ / 2 \ZZ)^m$
in the same elementary way . This however can be done only in the
birational setting, since in dimension $\geq 2$ we have different normal models
for the same function field.
Hence we have to resort to the theory of abelian covers, developed in
\cite{ms}, \cite{pardini}, \cite{sbc}.

In this biregular theory, coverings are described through equations holding
in certain vector bundles.
To compare the surface case with the curve case it is therefore 
useful first of all
to rewrite the above $(\ZZ / 2 \ZZ)^3$-Galois cover in terms of 
homogeneous coordinates.

And, for later calculations, it will be convenient to replace the 
points $\xi = \pm 1$ with the
points $0, \infty$.

We replace then the affine coordinates $( 1 : \xi)$  by coordinates $(x' : x)$
with $ \frac{x}{x'} = \frac{\xi - 1}{\xi + 1} $.

We can then rewrite the $(\ZZ / 2 \ZZ)^2$ cover as the normalization 
of the curve
in $\PP^1 \times \PP^1 \times \PP^1$ given by
$$ \{ (v' : v),  (w' : w),(x' : x) \ | \ v^2 x' = {v'}^2 x , \ w^2 
{x'}^2 = {w'}^2
( x^2 - x x' (b + \frac{1}{b} ) + {x'}^2)\}.$$

Now the involution exchanging pairs of branch points is simply the involution
$ (x' : x) \ra (x : x' )$.

The normalization is obtained simply by considering the curve of genus 1
which is the subvariety of the vector bundle whose sheaf of sections
on $\PP^1$ is $\hol_{\PP^1} (1) \oplus \hol_{\PP^1} (1) $, given by equations
$$ V^2 = x x' , \ W^2 = ( x^2 - x x' (b + \frac{1}{b} ) + {x'}^2),$$
which is shorthand notation for the following two equations
in the local chart outside $ x' = 0$, respectively in the local chart 
outside $ x = 0$:

$$  {(\frac{v}{v'})}^2  =  (\frac{x}{x'}) ,  \  {(\frac{w}{w'})}^2 =
{(\frac{x}{x'})}^2 - (b + \frac{1}{b} ) \frac{x}{x'}  + 1 ,$$
$$  {(\frac{v'}{v})}^2  =  (\frac{x'}{x}) , \ {(\frac{w}{w'}\frac{x'}{x})}^2 =
{(\frac{x'}{x})}^2 - (b + \frac{1}{b} ) \frac{x'}{x}  + 1  .$$

In other words, we have $ v^2  = x, \ {v' }^2 = x' $, hence $ V = v v'$.
While, setting $ W : = {(\frac{w}{w'})} x'$, we get
$ W^2 = ( x^2 - x x' (b + \frac{1}{b} ) + {x'}^2)$.

We have now the group $(\ZZ/2\ZZ)^3$ acting on $C$
by the following transformations
$$ g_1 : ((x':x),(v':v), (w':w)) \mapsto ((x':x),(v':-v),(w':-w)), $$
$$ f_2 : ((x':x),(v':v), (w':w)) \mapsto ((x:x'),(v:v'),(w' x : - w x')),$$
$$ f_3 : ((x':x),(v':v), (w':w)) \mapsto ((x:x'),(-v:v'),(w' x : w x')),$$
$$ g_2: ((x':x),(v':v), (w':w)) \mapsto ((x':x),(v':-v),(w':w)),$$
$$ g_3 =g_1 g_2: ((x':x),(v':v), (w':w)) \mapsto ((x':x),(v':v),(w':-w)).$$

The sections $V$ and $W$ are clearly eigenvectors for the group action.
It is easy to see , in view of the above table, that the image of  $ V = v v'$
equals $ - V,  V, -V, - V, V $ respectively,
while the image of $W$ equals
$- W , - W, W, W, -W$ respectively.

\section{Burniat surfaces are Inoue surfaces}\label{burniateqinoue}

The aim of this section is to show that Burniat surfaces are Inoue surfaces.
This fact seems to be  known to the experts, but, since we did
not find any reference,
we shall
provide a proof of this assertion, which is indeed crucial for our main result.

In \cite{burniat}, P. Burniat constructed a series of families of
surfaces of general type with $K^2 =6,5,4,3,2$
and $p_g = 0$ (of respective dimensions $4,3,2,1,0$) as singular
bidouble covers
(Galois covers with group $(\ZZ/2\ZZ)^2$) of the projective plane
branched on 9 lines.
    We briefly recall the construction.

\medskip
\noindent
Let $P_1, P_2, P_3 \in \PP^2$ be three non collinear points  and
denote by $Y:=\hat{\PP}^2(P_1, P_2,P_3)$
    the blow up of $\PP^2$ in $P_1, P_2, P_3$.

  $Y$ is a Del Pezzo
surface of degree $6$
and it is the closure of the graph of the rational map
$$\epsilon: \PP^2 \dashrightarrow \PP^1 \times \PP^1 \times \PP^1$$
such that $$\epsilon (y_1 : y_2: y_3)  = ((y_2 : y_3) ( y_3: y_1) (
y_1: y_2)).$$

It is immediate to observe that $Y \subset \PP^1 \times \PP^1 \times \PP^1$
is the hypersurface of type $(1,1,1)$:
$$ Y = \{(( x_1' : x_1), (  x_2':  x_2), ( x_3':  x_3)) \ | \ x_1 x_2 x_3
= x_1' x_2' x_3' \}.$$

\begin{lemma}\label{splitting}
Consider the cartesian diagram
\begin{equation*}
\xymatrix{
p^{-1}(Y) \ar[r]^{p} \ar[d]&Y \ar[d]^i\\
\PP^1 \times \PP^1 \times \PP^1\ar[r]_{p} & \PP^1 \times \PP^1 \times \PP^1\\
}
\end{equation*}
where $p: \PP^1 \times \PP^1 \times \PP^1 \ra \PP^1 \times \PP^1
\times \PP^1$ is the $(\ZZ/ 2 \ZZ)^3$ -Galois
covering given by $x_i = v_i^2, \ x_i' = (v_i')^2$. Then $p^{-1}(Y)$
splits as the union
$p^{-1}(Y) = Z \cup Z'$ of  two
degree 6  Del Pezzo surfaces, where
$$
Z:= \{((v_1:v'_1),(v_2:v'_2),(v_3:v'_3)) : v_1v_2v_3 = v'_1v'_2v'_3 \}
$$
and
$$
Z':= \{((v_1:v'_1),(v_2:v'_2),(v_3:v'_3)) : v_1v_2v_3 = -v'_1v'_2v'_3 \}.
$$
And $p|Z$ induces on $\PP^2$ the Fermat squaring map
$$
(y_0 : y_1:
y_2) \mapsto (y_0^2 : y_1^2: y_2^2).$$

Moreover, $Z \cap Z' = \{v_1v_2v_3 = v'_1v'_2v'_3 = 0 \}$, which is
the union of 6 lines
yielding in each Del Pezzo surface the fundamental hexagon of
the blow up of
$\PP^2$ in three non collinear points (i.e., the pull back of the
triangle with vertices the three points).
\end{lemma}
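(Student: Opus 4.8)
The plan is to pull back the single defining equation of $Y$ through the coordinatewise squaring map $p$ and factor the result. Since $p$ is given by $x_i = v_i^2$, $x_i' = (v_i')^2$, substituting into $x_1 x_2 x_3 = x_1' x_2' x_3'$ turns the scheme-theoretic preimage $p^{-1}(Y)$ (a Cartier divisor, as $Y$ is a $(1,1,1)$-hypersurface and $p$ is finite flat) into the divisor cut out by $(v_1 v_2 v_3)^2 = (v_1' v_2' v_3')^2$, i.e.
\[
(v_1 v_2 v_3 - v_1' v_2' v_3')(v_1 v_2 v_3 + v_1' v_2' v_3') = 0 .
\]
The two factors are precisely the equations of $Z$ and $Z'$; they are distinct and irreducible (each has the same shape as the irreducible equation of $Y$), hence coprime, so $p^{-1}(Y) = Z \cup Z'$ as reduced divisors, giving the first assertion. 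I would then identify $Z$ and $Z'$ as degree $6$ Del Pezzo surfaces: the equation $v_1 v_2 v_3 = v_1' v_2' v_3'$ of $Z$ is literally the equation of $Y$, so $Z \cong Y$ is a smooth degree $6$ Del Pezzo surface by the discussion preceding the Lemma, and the automorphism of $\PP^1 \times \PP^1 \times \PP^1$ sending $(v_1':v_1)\mapsto(-v_1':v_1)$ while fixing the other two factors carries $Z$ isomorphically onto $Z'$, so $Z' \cong Z$ is likewise such a surface.

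For the Fermat squaring map I would set up the blow-down $Z \to \PP^2$ as the analogue of $\epsilon$: introduce coordinates $(z_1 : z_2 : z_3)$ on the target plane via $(v_1':v_1)=(z_2:z_3)$, $(v_2':v_2)=(z_3:z_1)$, $(v_3':v_3)=(z_1:z_2)$, which satisfy $v_1 v_2 v_3 = v_1' v_2' v_3'$ and hence parametrize $Z$. Composing with $p$ gives on the first factor $(x_1':x_1) = ((v_1')^2 : v_1^2) = (z_2^2 : z_3^2)$, and comparing with $(x_1':x_1)=(y_2:y_3)$ on $Y$, and likewise for the other two factors, forces $(y_1:y_2:y_3) = (z_1^2:z_2^2:z_3^2)$. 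Thus $p|_Z$ induces the coordinatewise squaring map on $\PP^2$, as claimed.

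For the intersection, a point lies on $Z \cap Z'$ iff both $v_1 v_2 v_3 = v_1' v_2' v_3'$ and $v_1 v_2 v_3 = -v_1' v_2' v_3'$ hold, equivalently $v_1 v_2 v_3 = v_1' v_2' v_3' = 0$. Since $(v_i':v_i)\in\PP^1$ forbids $v_i = v_i' = 0$, such a point must have $v_i = 0$ for exactly one index $i$ and $v_j' = 0$ for some $j \neq i$; for each of the $6$ ordered pairs $(i,j)$ with $i \neq j$ the remaining factor is free, so the locus is a $\PP^1$, and $Z \cap Z'$ is a union of $6$ lines. I would then identify these with the fundamental hexagon by unwinding the coordinate dictionary: $\{v_1 v_2 v_3 = 0\} \cup \{v_1' v_2' v_3' = 0\}$ is exactly the pullback under the squaring map and the blow-down of the coordinate triangle $\{y_1 y_2 y_3 = 0\}$, whose total transform on the degree $6$ Del Pezzo surface is the hexagon of the three exceptional curves together with the three strict transforms of the lines $\overline{P_i P_j}$.

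There is no deep difficulty here; the entire argument reduces to the factorization and to bookkeeping. The only place requiring care — and the one I would treat as the main obstacle — is keeping the three coordinate dictionaries consistent (between the three $\PP^1$-factors and the two copies of $\PP^2$), and then checking that the six $\PP^1$'s meet in the cyclic incidence pattern of the fundamental hexagon rather than in some other configuration, which is what pins down the identification of $Z \cap Z'$ with the pullback of the triangle.
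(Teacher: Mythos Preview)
Your proof is correct and follows essentially the same approach as the paper: substitute $x_i=v_i^2$, $x_i'=(v_i')^2$ into the equation of $Y$, factor $(v_1v_2v_3)^2=(v_1'v_2'v_3')^2$, and read off $Z$, $Z'$ and their intersection. Your argument is considerably more detailed than the paper's (which leaves the Del Pezzo identification, the intersection computation, and the hexagon description to the reader); the one genuine variation is that the paper establishes the Fermat squaring claim by identifying the stabilizer $G^o=\{(\epsilon_i)\in\{\pm1\}^3:\prod_i\epsilon_i=1\}$ of $Z$ with the Galois group of the squaring map on $\PP^2$, whereas you verify it by an explicit coordinate parametrization $(v_i':v_i)=(z_{i+1}:z_{i+2})$ of $Z$ and direct computation---both arguments are short and equivalent.
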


\begin{proof}
The equation of $p^{-1}(Y) $ is $x_1 x_2 x_3 = x_1' x_2' x_3' $, i.e.,
$$ (v_1v_2v_3 )^2 = (v'_1v'_2v'_3)^2.$$

\noindent
The surface $Z$ is invariant under the subgroup
$$
  G^o \subset \{\ 1,
-1\}^3 \cong (\ZZ/ 2 \ZZ)^3, \
G^o \cong (\ZZ/ 2 \ZZ)^2,
$$
$$
  G^o = \{ (\epsilon_i) \in  \{\pm 1\}^3
| \prod_i \epsilon_i = 1\}.
$$

$ G^o $ acts on $Y$ by sending $ v_i \mapsto \epsilon_i v_i$, $v'_i
\mapsto  v'_i$,
and this is easily seen to give on $\PP^2$ the Galois group of the
Fermat squaring map.

\end{proof}

We denote by $E_i$ the exceptional curve
lying over $P_i$  and by $D_{i,1} $ the unique effective divisor in $
|L - E_i- E_{i+1}|$,
i.e., the proper transform of the line $y_{i-1} = 0$, side of the
triangle joining the points $P_i, P_{i+1}$.

For the present choice of coordinates $E_i$ is the side $ x_{i-1} =
x'_{i+1} = 0 $ of the hexagon,
while $D_{i,1} $ is the side $ x_{i} = x'_{i+1} = 0 $ of the hexagon.

Consider on $Y$ the following divisors
$$ D_i = D_{i,1} + D_{i,2} + D_{i,3} + E_{i+2} \in |3L - 3E_i -
E_{i+1}+E_{i+2}|,$$

where $D_{i,j} \in |L - E_i|, \ \rm{for} \ j = 2,3, \  D_{i,j} \neq
D_{i,1}$, is the proper transform of
another  line through
$P_i$ and $D_{i,1} \in |L - E_i- E_{i+1}|$
is as above. Assume also that all the corresponding lines in $\PP^2$
are distinct,
so that $D : = \sum_i D_i$ is  a reduced divisor.

Observe that all the indices in  $ \{1,2,3\}$ have here to be understood as
residue classes modulo 3.

Note that, if we define  the divisor $\mathcal{L}_i : = 3L - 2
E_{i-1} - E_{i+1}$, then
$$D_{i-1} + D_{i+1} = 6L - 4 E_{i-1} - 2E_{i+1} \equiv 2
\mathcal{L}_i,$$ and we can consider
(cf. \cite{sbc}) the associated bidouble cover $X \rightarrow Y$
branched on $D : = \sum_i D_i$
(but with different ordering of the indices: we take here one which is more apt
for our notation).

We recall that this precisely means the following: let $D_i =
\divi(\delta_i)$, and let $u_i$ be
    a fibre coordinate of the geometric line bundle $\LL_i$, whose sheaf
of holomorphic sections is
$\hol_Y(\mathcal{L}_i)$.

Then $X \subset \LL_1 \oplus \LL_2 \oplus
\LL_3$ is given by the equations:
$$
u_1u_2 = \delta_1 u_3, \ \ u_1^2 = \delta_3 \delta_1;
$$
$$
u_2u_3 = \delta_2 u_1, \ \ u_2^2 = \delta_1 \delta_2;
$$
$$
u_3 u_1 = \delta_3 u_2, \ \ u_3^2 = \delta_2 \delta_3.
$$

   From the birational point of view, we are simply adjoining to the
function field of $\PP^2$
two square roots, namely $\sqrt  \frac{\Delta_1}{ \Delta_3}$ and
$\sqrt  \frac{\Delta_2}{ \Delta_3}$,
where $\Delta_i$ is the cubic polynomial in $\CC[x_0,x_1,x_2]$ whose
zero set has $D_i$ as strict transform.

This shows clearly that we
have a Galois cover with group $(\ZZ / 2 \ZZ)^2$.

The equations above give a biregular model $X$ which is nonsingular
exactly if the divisor $D$
does not have points of multiplicity 3 (there cannot be points of
higher multiplicities). These points
give then quotient singularities of type $\frac{1}{4} (1,1) $, i.e.,
the quotient of $\CC^2$
by the action of $(\ZZ / 4 \ZZ)$ sending $ (u,v) \mapsto (iu, iv)$
(or, equivalently , the affine cone
over the 4-th Veronese embedding of $\PP^1$).

This (cf. \cite{cime} for more details) can be seen  by an elementary
calculation.

Assume in fact that $\delta_1, \delta_2, \delta_3$ are given in local
holomorphic coordinates
by $x,y, x-y$, and that we define locally $w_i$ as the square root of
$\delta_i$. Then:
$$ w_1^2 = x, \  w_2^2 = y,\  w_3^2 = x-y \ \Rightarrow \  w_3^2 = 
w_1^2 - w_2^2 .$$

Therefore the singularity is an $A_1$ singularity, quotient of $\CC^2$
by the action of $(\ZZ / 2 \ZZ)$ sending $ (u,v) \mapsto (-u, -v)$ (here,
$w_3 = uv, \ u^2 = w_1 + w_2, \ v^2 =  w_1 - w_2$). The action of 
$(\ZZ / 4 \ZZ)$
on $\CC^2$ induces the action of  $(\ZZ / 2 \ZZ)$ on the $A_1$ singularity,
sending $w_i \mapsto - w_i, \ \forall i$. Finally, the functions $u_i
= w_{i+1} w_{i+2}$
and $w_i^2 = \delta_i$ generate the  $(\ZZ / 4 \ZZ)$-invariants, subject to the
linear relation $\delta_1 - \delta_2 = \delta_3$.

The singularity can be resolved by blowing up the point $x=y=0$, and
then the inverse image of
the exceptional line is a smooth rational curve with self intersection $-4$.

\begin{defin}
    A {\em primary Burniat surface} is a surface constructed as above,
and which is moreover smooth.
    It is then a minimal surface $S$ with $K_S$ ample, and with $K_S^2 =
6$, $p_g(S) =q(S)= 0$.

     A {\em secondary Burniat surface} is a surface constructed as
above, and which  moreover has $ 1
     \leq m \leq 2$ singular points (necessarily of the type described above).
    Its minimal resolution is then a minimal surface $S$ with $K_S$ nef
and big, and with $K_S^2 = 6-m$, $p_g(S) =q(S)= 0$.

    A {\em tertiary Burniat surface} is a surface constructed as above,
and which  moreover has $ 3
     \leq m \leq 4$ singular points (necessarily of the type described above).
    Its minimal resolution is then a minimal surface $S$ with $K_S$ nef
and big, and with $K_S^2 = 6-m$, $p_g(S) =q(S)= 0$.

\end{defin}

\begin{rema}
1) We remark that for $K_S^2 =4$ there are two possible type of
configurations. The  one
where there are three collinear points of multiplicity at least 3 for
the plane curve formed by the 9 lines
leads to a Burniat surface $S$ which we call of  {\em nodal type},
and with $K_S$ not ample,
since the inverse image of the line joining the 3 collinear points is
a (-2)-curve (a smooth rational curve of
self intersection $-2$).

  In the other cases with $K_S^2 =4, 5$,
instead, $K_S$ is ample.

2) In the nodal case,  if we  blow up  the two $(1,1,1)$ points of
$D$, we obtain a weak Del Pezzo surface, since it
contains a (-2)-curve. Its anticanonical model has a node (an
$A_1$-singularity, corresponding to the contraction of
the (-2)-curve). In the non nodal case, we obtain a smooth Del Pezzo
of degree $4$.

This fact has obviously been overlooked by \cite{peters}, since he
only mentions the nodal case.

\noindent
In the sequel to this paper we shall show that in the case of
secondary Burniat surfaces  with $K^2_S = 4$ these
two families indeed give two different connected components of
dimension $2$ in the moduli space. And also that
secondary Burniat surfaces  with $K^2_S = 5$ form a connected
component of dimension $3$ in the moduli space.

\noindent
3) We illustrate the possible configurations in the plane in figure
\ref{configs}.
\end{rema}
\begin{figure}[htbp]\label{configs}
\begin{center}
\scalebox{0.7}{\includegraphics{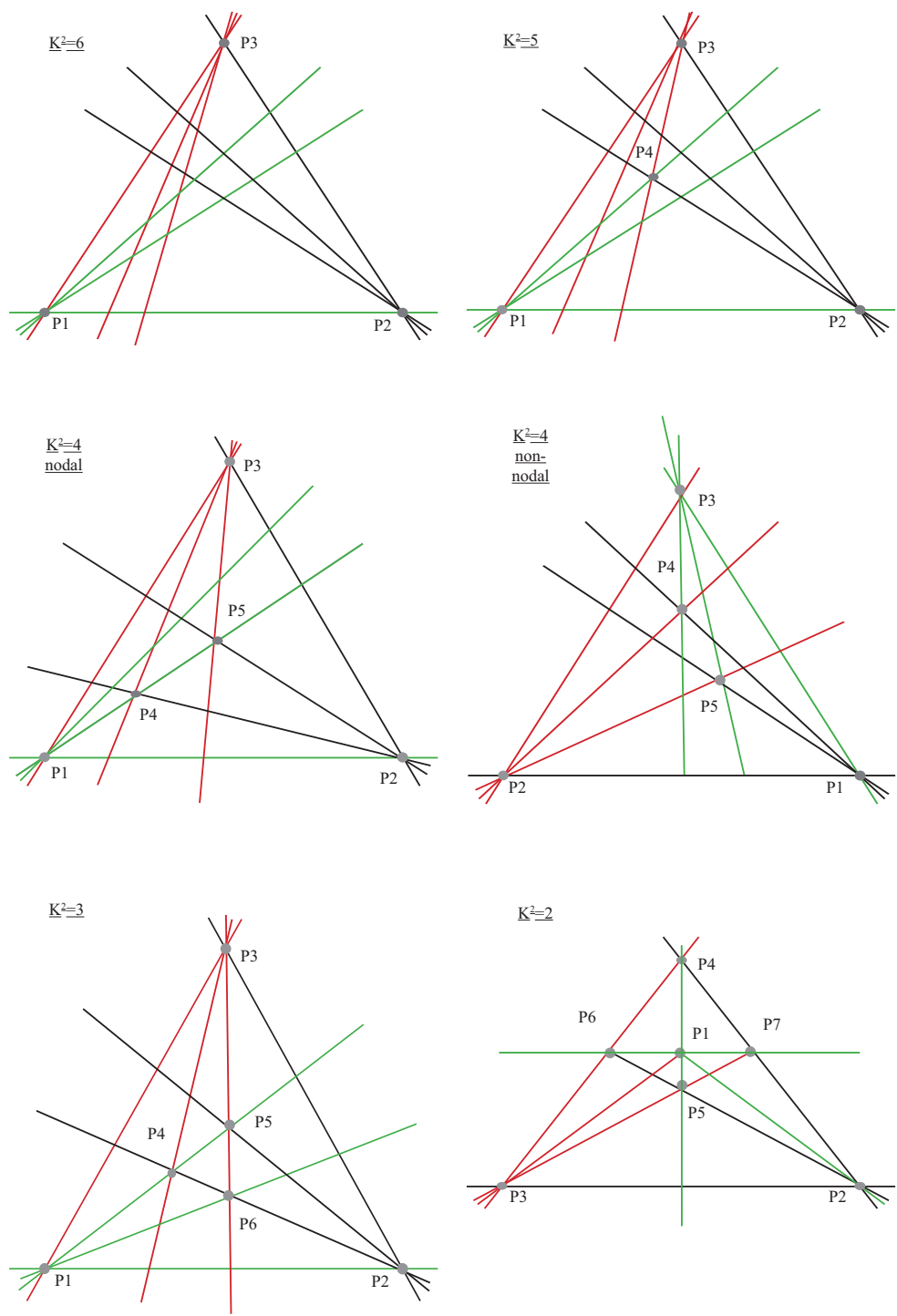}}
\end{center}
\caption{Configurations of lines}
\end{figure}

In \cite{inoue} Inoue constructed a series of  families of surfaces
with $K^2 = 6,5,4,3,2$ and $p_g = 0$
(of respective dimensions $4,3,2,1,0$, exactly as for the Burniat
surfaces) as the $(\ZZ / 2 \ZZ)^3$ quotient of an
invariant hypersurface of type $(2,2,2)$ in a product of three
elliptic curves. As already mentioned, it seems to be  known to the specialists
that these Inoue's  surfaces are exactly the Burniat's surfaces, but
for lack of a reference we show here:

\begin{theo}
Burniat's surfaces are exactly Inoue's surfaces.
\end{theo}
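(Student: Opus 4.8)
The plan is to realise Inoue's product-of-elliptic-curves picture as an explicit abelian cover sitting over Burniat's bidouble cover $X \to Y$, by assembling the normal forms of Section~\ref{legendre} with Lemma~\ref{splitting}. First I would put each of Inoue's three elliptic curves into the homogeneous normal form of Section~\ref{legendre}: the curve $\sE_i$ inside $\PP^1 \times \PP^1 \times \PP^1$, with fibre coordinates $(v_i' : v_i)$ and $(w_i' : w_i)$ over the line $(x_i' : x_i)$, cut out by $V_i^2 = x_i x_i'$ and $W_i^2 = x_i^2 - x_i x_i'(b_i + \frac{1}{b_i}) + x_i'^2$, and carrying the explicit order-two automorphisms $g_1, f_2, f_3, g_2, g_3$ computed there. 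The double cover $\sE_i \to \PP^1_{v_i}$ defined by $w_i$ is branched at four points, encoding the lines through $P_i$ that make up the branch divisor $D_i$, while the squaring map $v_i \mapsto v_i^2 = x_i$ identifies $\PP^1_{v_i} \to \PP^1_{x_i}$ with one factor of the covering $p$ of Lemma~\ref{splitting}.

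Next I would identify Inoue's invariant hypersurface. Since $Z \subset \PP^1_{v_1} \times \PP^1_{v_2} \times \PP^1_{v_3}$ is the divisor $\{v_1 v_2 v_3 = v_1' v_2' v_3'\}$ of Lemma~\ref{splitting}, its preimage $\hat{X}$ under the degree-$8$ map $\sE_1 \times \sE_2 \times \sE_3 \to \prod_i \PP^1_{v_i}$ is a divisor of type $(2,2,2)$, so that $K_{\hat X}^2 = \hat X^3 = 48$; this is exactly the type of Inoue's hypersurface. Lemma~\ref{splitting} is what guarantees that the relevant preimage splits off the correct Del Pezzo component $Z$ (and not $Z'$), with $Z \to Y$ the $(\ZZ/2\ZZ)^2$-cover $G^o$, i.e. the Fermat squaring. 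I would then realise Inoue's group $G^2 \cong (\ZZ/2\ZZ)^3$ as an explicit subgroup of $\Aut(\sE_1 \times \sE_2 \times \sE_3)$ generated by involutions built from the $g_1, g_2, g_3$ of Section~\ref{legendre} (for instance simultaneous translations by the $2$-torsion $\eta_i$ on two of the three factors), chosen so that an even number of $v$-sign-changes occurs in each element, whence $\hat X$ is preserved rather than swapped with its companion component, while a fixed-point-free translation occurs on some factor, whence the action is \emph{free}. Using the eigenvalue table at the close of Section~\ref{legendre} (the action on $V_i$ and $W_i$) I would verify freeness on $\hat X$, so that $K_{\hat X/G^2}^2 = 48/8 = 6$, matching the primary Burniat invariants.

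Finally I would match the two quotients. The cover $\hat X \to Y$ is Galois with group $(\ZZ/2\ZZ)^5$ (the automorphisms fixing each $x_i$ and preserving the component $\hat X$), inside which $G^2$ sits as an order-$8$ subgroup; hence $\hat X / G^2 \to Y$ is Galois with group $(\ZZ/2\ZZ)^2$, that is, a bidouble cover. Reading off from the same table which character each section $V_i, W_i$ lives in identifies the building data of this bidouble cover with Burniat's $\mathcal{L}_i$ and $\delta_i$, the branch divisor being $\sum_i D_i$ precisely because the branch points of the quadratics $W_i^2$ are the lines $D_{i,j}$; this gives $\hat X/G^2 \cong X$. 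Running the construction backwards, by choosing the parameters $b_i$ so that the four branch points of each $\sE_i$ sit on the prescribed nine lines, shows conversely that every Burniat surface arises as such an Inoue quotient, giving the asserted equality of the two families.

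The main obstacle is exactly the bookkeeping in this last step: pinning down the correct generators of $G^2$ among the several $(\ZZ/2\ZZ)^3$-subgroups available, and simultaneously checking freeness of the action on $\hat X$ and the character assignments, so that the quotient is the bidouble cover branched on precisely $D$ and on no other configuration. This is where Inoue's transcendental description and Burniat's algebraic building data must be forced to coincide, and it is the step that will require the explicit $2$-torsion normal forms of Section~\ref{legendre} in full.
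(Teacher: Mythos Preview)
Your overall architecture matches the paper's: build the tower $\hat X \subset \sE_1\times\sE_2\times\sE_3 \to Z \to Y$ using Lemma~\ref{splitting} and the normal forms of Section~\ref{legendre}, identify a $(\ZZ/2\ZZ)^5$-action on $\hat X$ over $Y$, and isolate the correct $(\ZZ/2\ZZ)^3$-subgroup $G^2$ so that $\hat X/G^2 = X$. The difference is in how $G^2$ is pinned down.

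You propose to single out $G^2$ by abstract criteria (preserving $\hat X$, acting freely, character compatibility), and you flag the resulting bookkeeping as the main obstacle. The paper bypasses this search entirely: it writes down the explicit formula $u_i = W_{i-1}W_i\,v_i v_i'$ and checks directly that $u_i^2 = \delta_i'\delta_{i-1}'\,x_i x_i' = \delta_i\delta_{i-1}$, so these $u_i$ satisfy Burniat's bidouble-cover equations on the nose. Then $G^2$ is simply \emph{defined} as the stabiliser of the $u_i$ inside $G^1$, and its generators and their action on the uniformisers $z_i$ drop out mechanically. This is cleaner than guessing $G^2$ first and verifying afterwards, and it is worth noting because your freeness criterion does not actually determine $G^2$ uniquely (and is not even valid beyond the primary case: for $K^2\le 5$ the map $\hat X\to X$ is ramified over the $\frac14(1,1)$ points, so freeness fails there and cannot be used as a selection principle for $G^2$ in the general statement).

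For the converse the paper also makes a point you leave implicit: a $G^2$-invariant $(2,2,2)$-hypersurface in $\sE_1\times\sE_2\times\sE_3$ is automatically the pullback of some Del Pezzo $Z_c = \{v_1v_2v_3 = c\,v_1'v_2'v_3'\}$, so every Inoue surface is a Burniat surface without needing to tune the $b_i$.
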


\begin{proof}
Consider as in lemma \ref{splitting} the cartesian diagram
\begin{equation*}
\xymatrix{
p^{-1}(Y) \ar[r]^{p} \ar[d]&Y \ar[d]^i\\
\PP^1 \times \PP^1 \times \PP^1\ar[r]_{p} & \PP^1 \times \PP^1 \times \PP^1\\
}
\end{equation*}
where $p: \PP^1 \times \PP^1 \times \PP^1 \ra \PP^1 \times \PP^1
\times \PP^1$ is the $(\ZZ/ 2 \ZZ)^3$ -Galois
covering given by $x_i = v_i^2, \ x_i' = (v_i')^2$. Then $p^{-1}(Y)$
splits as the union of  two
degree 6  Del Pezzo surfaces
$p^{-1}(Y) = Z \cup Z'$, where
$$
Z:= \{((v_1:v'_1),(v_2:v'_2),(v_3:v'_3)) : v_1v_2v_3 =  v'_1v'_2v'_3 \}
$$
and
$$
Z':= \{((v_1:v'_1),(v_2:v'_2),(v_3:v'_3)) : v_1v_2v_3 = -  v'_1v'_2v'_3 \}.
$$
Recall that the subgroup of $( \ZZ / 2\ZZ)^3$ stabilizing $Z$ is $G^o
= \{ (\epsilon_i) \in  \{\pm 1\}^3 | \prod_i
\epsilon_i = 1\}$.

We can further extend the previous diagram by considering a $( \ZZ /
2\ZZ)^6$ Galois-covering
    $\hat{p}: \sE_1 \times \sE_2 \times \sE_3 \ra \PP^1 \times \PP^1
\times \PP^1$ obtained by taking,
    with  different
choices of the $(x' : x)$ coordinates, the direct product of three $( \ZZ /
2\ZZ)^2$ Galois-coverings $\sE_i \ra  \PP^1$ as in
section
\ref{legendre}.

What we have now explained is summarized in the bottom two lines of 
the following
commutative diagram,
where  $\hat{X} $ is defined as the inverse image of the Del Pezzo surface $Z$.

\begin{equation*}
\xymatrix{
\hat{X} \ar[r]^{G^2} \ar[d]_{\hat{i}}& X = \hat{X} /G^2\ar[dr] & \\
\hat{X } \cup \hat{X}' \ar[r] \ar[d] & Z \cup Z' \ar[d] \ar[r] &Y \ar[d]\\
\sE_1 \times \sE_2 \times \sE_3 \ar[r]^{(\ZZ /2)^3}&\PP^1 \times
\PP^1 \times \PP^1 \ar[r]^{(\ZZ/2)^3}&\PP^1
\times
\PP^1
\times
\PP^1. }
\end{equation*}
Note that the vertical map $i: \hat{X} \cup \hat{X}' \hookrightarrow
\sE_1 \times \sE_2 \times \sE_3$ is the
inclusion   of $\hat{X} \cup \hat{X}'$ as a divisor of multidegree
$(4,4,4)$ splitting as a union of two
divisors of respective multidegrees $(2,2,2)$.

Next we want to show that $\hat{X}$ is a $( \ZZ / 2\ZZ)^3$ Galois
covering of $X$,
ramified only in the points of type
$\frac{1}{4}(1,1)$ (and hence \'etale in the case of a primary Burniat $X$).

In fact, the stabilizer of $\hat{X}$  is $$G^1 :=
    \{(\epsilon_i, \epsilon'_i) \in
\{\pm 1\}^3
\times
\{\pm 1\}^3 |
\prod_i
\epsilon_i = 1\} \cong (\ZZ /2 \ZZ)^5.$$

The action of $G^1$ makes $\hat{X}$ a $( \ZZ / 2\ZZ)^5$ Galois covering of $Y$,
and we claim that we obtain $X$ as an intermediate cover by
setting
$$
u_i = W_{i-1}W_{i}v_{i}v'_{i}.
$$

Let us  denote $(D_{i,2} + D_{i,3})$ by $D'_i$. This is the divisor
defined by a
section $\de'_i = 0$ which is the pull back of a homogeneous
polynomial of degree 2
on the i-th copy of $\PP^1$  (this polynomial is the polynomial
$ ( x_i^2 - x_i x_i' (b_i + \frac{1}{b_i} ) + {x_i'}^2)$
in the notation of section \ref{legendre}).

Let us then write
$$
D_i =  (D_{i,1} + E_{i+2}) + (D_{i,2} + D_{i,3}) =  D_{i,1} + E_{i+2} + D'_i.
$$

Observe that $ \divi (x_i) = D_{i,1} +  E_{i+1}$,
    $ \divi (x'_i) = D_{i-1,1} +  E_{i-1}$, whence
$$D_i  + D_{i-1}  = D'_i  + D'_{i-1} + D_{i,1} + E_{i+2} + D_{i-1,1} + E_{i+1}=
\divi (\de'_i \de'_{i-1} x_i x'_i).$$

Now, the $(\ZZ / 2 \ZZ)^2$  Galois- covering of the i-th copy of
$\PP^1$ is given by:

$$
(v'_i v_i) ^2 =  x_i x'_i ,  \ W_i^2 =\de'_i.
$$

Since
$$
u_i^2 =  \de_{i} \de_{i-1},
$$

we see that
$$u_i^2  = \de_{i} \de_{i-1} = \de'_i \de'_{i-1} x_i x'_i = (W_{i}
W_{i-1} v_{i}v'_{i})^2.
$$

Whence we have established our claim that setting
$$
u_i = W_{i-1}W_{i}v_{i}v'_{i}
$$
we get a mapping $\hat{X} \ra X$.

We also see that $\hat{X} \ra X$ is Galois with  Galois group  the
subgroup $G^2 < G^1$ leaving
each $u_i $
invariant, which, by the above formulae,  is given by
$$
\{(\epsilon_i, \epsilon'_i) \in G^1 |
\epsilon_{i-1}' \epsilon_{i}'\epsilon_{i} = 1, i \in \{1,2,3\}\}
\cong (\ZZ /2 \ZZ)^3.
$$

The last isomorphism follows since the $\epsilon_i'$'s  determine
$\epsilon_i = \epsilon_{i-1}' \epsilon_{i}'$.

A natural basis for $G^2 \leq G^1 \leq (\ZZ / 2 \ZZ)^6 \cong (\ZZ / 2
\ZZ)^3 \oplus (\ZZ / 2 \ZZ)^3$ is given by
$$
    (\begin{pmatrix}
    1\\ 0\\ 0
\end{pmatrix}, \begin{pmatrix}
     1\\ 1\\ 0
\end{pmatrix}) =:g_1, \
(\begin{pmatrix}
    0\\ 1\\ 0
\end{pmatrix}, \begin{pmatrix}
     0\\ 1\\ 1
\end{pmatrix})=:g_2, \ (\begin{pmatrix}
    0\\ 0\\ 1
\end{pmatrix}, \begin{pmatrix}
     1\\ 0\\ 1
\end{pmatrix}) =:g_3.
$$
Therefore, if $z_i$ is a uniformizing parameter for the elliptic curve
$\sE_i$, with $z_i =0$ corresponding
to the origin of  $\sE_i$, we see
that the action of
$G^2$  on
$\sE_1
\times
\sE_2
\times
\sE_3$ (cf. section
\ref{legendre}) is given as follows:
$$
g_1\begin{pmatrix}
z_1 \\
z_2 \\
z_3
\end{pmatrix}
=
\begin{pmatrix}
z_1 +\eta_1 \\
-z_2 \\
z_3
\end{pmatrix}, \ g_2\begin{pmatrix}
z_1 \\
z_2 \\
z_3
\end{pmatrix}
=
\begin{pmatrix}
z_1  \\
z_2 + \eta_2 \\
-z_3
\end{pmatrix}, \ g_2\begin{pmatrix}
z_1 \\
z_2 \\
z_3
\end{pmatrix}
=
\begin{pmatrix}
- z_1  \\
z_2  \\
z_3 + \eta_3
\end{pmatrix}.
$$

\begin{rema}
    If $X$ is a primary Burniat surface, then $\hat{X} \rightarrow X$ is
an \'etale $(\ZZ / 2 \ZZ)^3$- covering.

Instead, for
each $(1,1,1)$ - point of $D = D_1 + D_2 + D_3$, $X$ has a singular
point of type $\frac{1}{4}(1,1)$, and $\hat{X}
\rightarrow X$ is ramified in exactly these singular points, yielding
$4$ nodes on $\hat{X}$ for
each one of these singular
points on $X$.
\end{rema}

Since $\hat{X} $ is a divisor of type $(2,2,2)$ invariant by the
action of $G^2$,
we have seen that  any Burniat surface $X$ is an Inoue surface.

Conversely, assume that $X = \hat{X}/ G^2  $ is an Inoue surface: 
since every such $G^2$-
invariant surface $\hat{X}$ is the pull back of a Del Pezzo
surface
\begin{equation}\label{constant}
Z_c : = \{ (v'_i, v_i) | v_1v_2v_3 - c v'_1v'_2v'_3 = 0\},
\end{equation}
we see that $X$ is a Burniat surface.

\end{proof}

\begin{rema}
In the above equation (\ref{constant}) there is a constant $c$ 
appearing, whereas in the
previous description we had normalized this constant to be equal to $1$.

On each $\PP^1$ there are the points $v_i=0$, $v'_i=0$, hence these
coordinates are determined up to a constant $\la_i$. In turn, we have two more
branch points, forming the locus of zeroes of an equation which we normalized
as being $ v_i^2 + (b_i + \frac{1}{b_i}) \ v_i v'_i + {v'_i}^2 = 0$.
This normalization now determines the constant $\la_i$ uniquely,
and finally with these choice of coordinates
we get the equation (\ref{constant}) with $ c = \prod_i \la_i$,
and we see that $c$ is a function of $b_1, b_2, b_3$.
\end{rema}

\section{The fundamental groups of Burniat surfaces}\label{fundgroup}

The aim of his section is to combine our and  Inoue's representation of Burniat
surfaces in order to calculate the fundamental
groups of the Burniat surfaces with $K^2 = 6,5,4,3,2$.

In \cite{inoue} the author gave a table of the respective
fundamental groups, but without supplying a proof.
As we shall now see, his assertion is right for  $K^2 = 6,5,4,3$ but 
wrong for the case $K^2 = 2$.
So we believe it worthwhile to give a detailed proof, especially in 
order to cast away any doubt
on the validity of his assertion for  $K^2 = 6,5,4,3$.

Let $(\sE,o)$ be any elliptic curve, and consider as in section
\ref{legendre} the
$G = (\ZZ /2
\ZZ)^2 =
\{0, g_1, g_2, g_3:=g_1g_2
\}$ - action given by
$$
g_1(z) := z + \eta, \ \ g_2(z) = -z,
$$
where $\eta \in \sE$ is a $2$ - torsion point of $\sE$.
\begin{rema}\label{invdiv}
    The divisor $[o] + [\eta] \in Div^2(\sE)$ is invariant under $G$,
hence the invertible
sheaf  $\hol_{\sE}([o] + [\eta])$ carries a natural $G$-linearization.

In particular, $G$ acts on the vector space $ H^0(\sE, \hol_{\sE}([o] +
[\eta]))$ which splits then as a direct
sum $$H^0(\sE, \hol_{\sE}([o] + [\eta])) = \bigoplus_{\chi \in G^*} H^0(\sE,
\hol_{\sE}([o] + [\eta]))^{\chi}$$
of the eigenspaces corresponding to the characters  $\chi$  of $G$.

We shall use a self explanatory notation:

for instance
$H^0(\sE, \hol_{\sE}([o] + [\eta]))^{+-}$ is  the eigenspace corresponding
to the character
$\chi$ such that $\chi(g_1) = 1$, $\chi(g_2) = -1$.

\end{rema}

We recall the following:

\begin{lemma}[\cite{keumnaie}, lemma 2.1]\label{h++}
Let $\sE$ be as above.  Then
    $$H^0(\sE, \hol_{\sE}([o] + [\eta])) = H^0(\sE, \hol_{\sE}([o] +
[\eta]))^{++} \oplus H^0(\sE, \hol_{\sE}([o] +
[\eta]))^{--}$$

\noindent
I.e., $H^0(\sE, \hol_{\sE}([o] + [\eta]))^{+-} = H^0(\sE,
\hol_{\sE}([o] + [\eta]))^{-+} =0$.
\end{lemma}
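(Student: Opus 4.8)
The plan is to use Riemann--Roch to pin down the dimension, exhibit the obvious invariant section, and then isolate the single remaining eigenvector by a residue computation.

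First I would record that $D := [o] + [\eta]$ has degree $2$, so by Riemann--Roch on the elliptic curve $\sE$ one has $h^0(\sE, \hol_\sE(D)) = 2$, and I identify this space with the space $L(D)$ of meromorphic functions $f$ on $\sE$ satisfying $\divi(f) + D \geq 0$, i.e.\ having at most simple poles at $o$ and at $\eta$ and no others. The constant function $1$ lies in $L(D)$, and since $2 > 1$ there is a nonconstant function too. Next I check that $D$ is $G$-invariant so that $G$ genuinely acts: as $o$ and $\eta$ are both $2$-torsion, $g_2(z) = -z$ fixes each of $o, \eta$, while $g_1(z) = z + \eta$ interchanges them (because $2\eta = o$); hence $g_1^* D = g_2^* D = D$. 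This is the linearization of Remark \ref{invdiv}, and on $L(D)$ it is given by pullback, $(g \cdot f)(z) = f(g^{-1} z)$.

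Since $G$ is abelian of order $4$ and we are in characteristic $0$, the two-dimensional space $L(D)$ splits into one-dimensional eigenspaces. The constant $1$ is fixed by both generators, so it spans the trivial character, $1 \in H^0(\sE, \hol_\sE(D))^{++}$; it therefore suffices to determine the character of the one remaining nonconstant eigenvector $h$ and show it is $--$, which gives $H^0(\sE, \hol_\sE(D))^{+-} = H^0(\sE, \hol_\sE(D))^{-+} = 0$. The key device is the residue functional $\rho(f) := \operatorname{Res}_o f$. By the residue theorem applied to the nowhere-zero holomorphic differential $f\,dz$, one has $\operatorname{Res}_o f + \operatorname{Res}_\eta f = 0$, so $\rho$ vanishes exactly on functions holomorphic everywhere; thus $\ker \rho = \langle 1 \rangle$ and $\rho(h) \neq 0$. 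Now since $g_1^{-1}(z) = z - \eta$ carries a neighbourhood of $o$ to one of $-\eta = \eta$, a change of variable gives $\rho(g_1 \cdot f) = \operatorname{Res}_o f(z-\eta) = \operatorname{Res}_\eta f = -\rho(f)$; and since $g_2^{-1}(z) = -z$ fixes $o$ and reverses the local coordinate, $\rho(g_2 \cdot f) = \operatorname{Res}_o f(-z) = -\rho(f)$. Writing $g_1 \cdot h = \lambda h$, $g_2 \cdot h = \mu h$ and applying $\rho$ forces $\lambda = \mu = -1$, so $h$ affords the character $--$ and $L(D) = H^0(\sE, \hol_\sE(D))^{++} \oplus H^0(\sE, \hol_\sE(D))^{--}$, as claimed.

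The step I expect to require the most care is matching the pullback action on $L(D)$ with the precise $G$-linearization of Remark \ref{invdiv}: a different normalization twists the whole representation by a character of $G$ and would permute the four eigenspaces. What is robust under this ambiguity is that the two occurring characters always differ by $--$, equivalently that $g_3 = g_1 g_2$ acts trivially (it swaps $o, \eta$, and the same residue computation yields $\rho(g_3 \cdot f) = +\rho(f)$); pinning down that the occurring pair is exactly $\{++, --\}$ rather than $\{+-, -+\}$ is precisely what the invariance of the constant section $1$ provides. The only other delicate point is the sign bookkeeping in the residue change of variable, which is indicated above.
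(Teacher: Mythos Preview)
Your argument is correct. The paper itself does not supply a proof of this lemma---it is quoted from \cite{keumnaie}---so there is no proof in the present paper to compare against. Your residue-functional approach is clean and self-contained: the two computations $\rho\circ g_1 = -\rho$ and $\rho\circ g_2 = -\rho$ simultaneously force the nonconstant eigenvector into the $--$ eigenspace, while the constant section $1$ lands in the $++$ eigenspace, which pins down the occurring pair of characters as $\{++,--\}$ rather than a twist of it. Your closing caveat about the normalization of the linearization is exactly on point and correctly resolved: since Remark \ref{invdiv} takes the linearization induced by the $G$-invariance of the divisor $[o]+[\eta]$ itself (not merely of its linear equivalence class), the resulting action on $H^0$ is precisely the pullback action on $L(D)$, under which $1$ is invariant, so your identification is the intended one.
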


Let now $\sE_i := \CC / \Lambda_i$, $i=1,2,3$,
be three complex elliptic
curves, and write $\Lambda_i =
\ZZ e_i \oplus \ZZ e_i'$.

Define now  affine transformations $\gamma_1,  \gamma_2, \gamma_3 \in
\mathbb{A}(3,\CC)$
as follows:

$$
\gamma_1 \begin{pmatrix}
     z_1\\z_2\\z_3
\end{pmatrix} = \begin{pmatrix}
     z_1 + \frac{e_1}{2}\\ - z_2\\ z_3
\end{pmatrix}, \ \ \gamma_2 \begin{pmatrix}
     z_1\\z_2\\z_3
\end{pmatrix} = \begin{pmatrix}
     z_1\\ z_2 + \frac{e_2}{2}\\- z_3
\end{pmatrix}, \ \ \gamma_3 \begin{pmatrix}
     z_1\\z_2\\z_3
\end{pmatrix} = \begin{pmatrix}
     - z_1\\ z_2\\z_3 + \frac{e_3}{2}
\end{pmatrix}, \ \
$$

\noindent
and let $\Gamma \leq \mathbb{A}(3,\CC)$ be the affine group generated
by $\gamma_1, \gamma_2, \gamma_3$
and by the translations by the vectors $e_1, e_1', e_2, e_2', e_3, e_3'$.
\begin{rema}
$\Gamma$ contains the lattice $ \Lambda_1 \oplus \Lambda_2 \oplus \Lambda_3$,
hence $\Gamma$ acts  on $\sE_1 \times \sE_2 \times \sE_3$ inducing a
faithful action
of $G^2:= (\ZZ /2 \ZZ)^3$ on $\sE_1 \times \sE_2 \times \sE_3$.

\end{rema}
We prove next the following
\begin{theo}
Let $S$ be the minimal model of a Burniat surface.
\begin{itemize}
\item[i)] $K^2 = 6$ $\implies$ $\pi_1(S) = \Gamma$, $H_1(S, \ZZ) =
(\ZZ / 2\ZZ)^6$;
\item[ii)] $K^2 = 5$ $\implies$ $\pi_1(S) = \mathbb{H} \oplus (\ZZ /
2 \ZZ)^3$, $H_1(S, \ZZ) = (\ZZ / 2\ZZ)^5$;
\item[iii)] $K^2 = 4$ $\implies$ $\pi_1(S) = \mathbb{H} \oplus (\ZZ /
2 \ZZ)^2$, $H_1(S, \ZZ) = (\ZZ / 2\ZZ)^4$;
\item[iv)] $K^2 = 3$ $\implies$ $\pi_1(S) = \mathbb{H} \oplus (\ZZ /
2 \ZZ), H_1(S, \ZZ) = (\ZZ / 2\ZZ)^3$;
\item[v)] $K^2 = 2$ $\implies$ $\pi_1(S) =  H_1(S, \ZZ) = (\ZZ / 2\ZZ)^3$.
\end{itemize}

Here $\mathbb{H}$ denotes the quaternion group of order $8$.
\end{theo}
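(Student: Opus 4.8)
The plan is to establish the primary case $K^2=6$ directly and then obtain every other case as an explicit quotient of the resulting group $\Gamma$.

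For $K^2=6$ the surface $\hat{X}$ is a \emph{smooth} divisor of multidegree $(2,2,2)$ in the abelian threefold $A:=\sE_1\times\sE_2\times\sE_3$. Since this class is ample and $\dim A=3$, the Lefschetz hyperplane theorem gives $\pi_1(\hat{X})\cong\pi_1(A)=\Lambda_1\oplus\Lambda_2\oplus\Lambda_3\cong\ZZ^6$. By the previous section $\hat{X}\to S=\hat{X}/G^2$ is an \'etale Galois covering with group $G^2\cong(\ZZ/2\ZZ)^3$, so $\pi_1(S)$ is the extension $1\to\ZZ^6\to\pi_1(S)\to G^2\to1$ determined by the way the $G^2$-action on $A$ lifts to the universal cover $\CC^3$. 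As the generators of $G^2$ lift to the affine maps $\gamma_1,\gamma_2,\gamma_3$ and $\Gamma$ is by construction generated by these together with $\Lambda:=\Lambda_1\oplus\Lambda_2\oplus\Lambda_3$, the extension \emph{is} $\Gamma$, whence $\pi_1(S)=\Gamma$. For $H_1$ I would abelianize $\Gamma$ directly: the relations $\gamma_i^2=e_i$ combined with $(\gamma_i\gamma_{i+1})^2=\gamma_i^2$ force $\bar\gamma_i^2=1$ and $\bar e_i=1$ in $\Gamma^{ab}$, while the sign-change relations make each $\bar e_i'$ of order two; checking that the six classes $\bar\gamma_1,\bar\gamma_2,\bar\gamma_3,\bar e_1',\bar e_2',\bar e_3'$ are independent yields $H_1(S,\ZZ)=\Gamma^{ab}\cong(\ZZ/2\ZZ)^6$.

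For $3\le K^2\le5$ and for $K^2=2$ the surface $\hat{X}$ is again a $(2,2,2)$-divisor, now carrying $4(6-K^2)$ nodes, and $\hat{X}\to X$ is ramified exactly over the $\frac{1}{4}(1,1)$-points of $X$. I would first observe that resolving the nodes of $\hat{X}$ (exceptional $(-2)$-curves) and the singular points of $X$ (exceptional $(-4)$-curves) changes neither fundamental group, since in each case both the singular germ and its resolution are simply connected; hence $\pi_1(S)=\pi_1(X)$, and, by the Lefschetz theorem for the ample (singular) divisor $\hat{X}$, $\pi_1(\hat{S})=\pi_1(\hat{X})=\ZZ^6=\Lambda$. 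Thus the universal cover of the smooth model $\hat{S}$ carries a properly discontinuous action of $\Gamma$ with quotient $S$, and Armstrong's theorem identifies $\pi_1(S)$ with $\Gamma/\langle\langle\Gamma_{\mathrm{fix}}\rangle\rangle$, where $\Gamma_{\mathrm{fix}}$ is the normal subgroup generated by the elements of $\Gamma$ having a fixed point lying over a node. Since a nontrivial finite subgroup of $\Gamma$ injects into $G^2$, every such element is an involution; a short computation shows that the only involutions of $\Gamma$ are the $\Lambda$-translates $t_\lambda\rho$ of $\rho:=\gamma_1\gamma_2\gamma_3$, which has linear part $-\mathrm{Id}$ and fixed point $(e_1/4,-e_2/4,-e_3/4)$.

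The heart of the argument is therefore to determine, for each admissible line configuration, precisely which translates $t_\lambda\rho$ fix a node of $\hat{X}$, and then to carry out the corresponding quotient of $\Gamma$. Imposing $t_\lambda\rho=1$ drives the product $\gamma_1\gamma_2\gamma_3$ into $\Lambda$ and, together with $\gamma_i^2=e_i$ and $(\gamma_i\gamma_{i+1})^2=\gamma_i^2$, forces the three squares $\gamma_i^2$ to coincide in a single central element of order two; the subgroup generated by $\gamma_1,\gamma_2,\gamma_3$ then collapses to the quaternion group $\mathbb{H}$, while the half-period translations that remain unconstrained contribute a complementary direct factor $(\ZZ/2\ZZ)^{K^2-2}$, giving $\pi_1(S)=\mathbb{H}\oplus(\ZZ/2\ZZ)^{K^2-2}$ for $3\le K^2\le5$. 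The value of $H_1$ is then immediate from $\mathbb{H}^{ab}\cong(\ZZ/2\ZZ)^2$, yielding $(\ZZ/2\ZZ)^{K^2}$ throughout. The hardest and most delicate step is the case $K^2=2$: here all four nodal relations are imposed simultaneously, and I expect the extra relation to collapse the quaternion part entirely, so that $\pi_1(S)$ degenerates to the abelian group $(\ZZ/2\ZZ)^3$ rather than a group containing $\mathbb{H}$. Verifying this degeneration — and thereby pinpointing the error in Inoue's table — is the main obstacle, because it requires tracking exactly how the four fixed-point involutions $t_\lambda\rho$ interact inside $\Gamma$.
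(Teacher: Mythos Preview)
Your outline is correct and follows essentially the same route as the paper: Lefschetz for $K^2=6$, then Armstrong's theorem $\pi_1(X)\cong\Gamma/\langle\langle\Gamma_{\mathrm{fix}}\rangle\rangle$ for $K^2\le 5$, with the fixed-point elements being exactly the $t_\lambda\rho$ and the resulting quotient computed case by case. Two small technical points to tighten: for the nodal $\hat{X}$ the paper invokes Brieskorn--Tyurina (simultaneous resolution) rather than a singular Lefschetz theorem to get $\pi_1(\hat{X})\cong\ZZ^6$, and the identification $e_1=e_2=e_3$ for $K^2=5$ comes not from a single relation $t_\lambda\rho=1$ but from the full $G^2$-orbit of four nodes over one $(1,1,1)$-point (which also yields $2\Lambda\subset\mathrm{Tors}(\Gamma)$); once you make this explicit, the ``heart of the argument'' you flag for $K^2\le 4$ is carried out in the paper exactly as you anticipate, via the explicit Legendre coordinates of Section~\ref{legendre}.
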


\begin{rema}
As already said, these results confirm, except for the case $K^2= 2$,
  the results of
Inoue \cite{inoue}, stating that for $ K^2 \leq 5$ $\pi_1(S) =
\mathbb{H}
\oplus (\ZZ / 2 \ZZ)^{ {K}^2 -2}$.
\end{rema}
\begin{proof}
i) Let $S$ be the minimal model of a Burniat surface with $K_S^2 = 6$.
Then, by the previous section \ref{burniateqinoue},
     $S = X$ has an \'etale
$(\ZZ / 2 \ZZ)^3$ Galois covering $\hat{X}$, which is a hypersurface
of multidegree $(2,2,2)$ in the product of three
elliptic curves $\sE_1 \times \sE_2 \times \sE_3$. Since $\hat{X}$ is
smooth and ample, by Lefschetz's theorem
$\pi_1(\hat{X}) = \pi_1(\sE_1 \times \sE_2 \times \sE_3) \cong
\ZZ^6$.

$\Gamma$ acts on the universal covering of
$\sE_1
\times \sE_2 \times \sE_3 \cong \CC^3$, and acts freely on the
invariant hypersurface $\tilde{X} \subset \CC^3$, the
universal covering of $\hat{X}$, with quotient $S  = X = \tilde{X}/ \Gamma$.
Hence  $\tilde{X}$ is also the universal covering of $S=X$ and
$\pi_1(S) = \Gamma$.

Next we shall prove that $H_1(S, \ZZ) =
(\ZZ / 2\ZZ)^6$.

Since $\gamma_i^2 = e_i$, for $i=1,2,3$, it follows that $\Gamma$ is
generated by $g_1,g_2,g_3, e_1',e_2',e_3'$. It
is clear that
\begin{itemize}
\item[a)] $\gamma_1$ commutes with $e_1, e_1', e_3, e_3'$;
\item[b)] $\gamma_2$ commutes with $e_2, e_2', e_1, e_1'$;
\item[c)] $\gamma_3$ commutes with $e_2, e_2', e_3, e_3'$.
\end{itemize}
Writing   $t_{e_i} \in \mathbb{A}(3,
\CC)$ for the translation by the vector $e_i$,  we see that
$$
\gamma_1 t_{e_2} = t_{e_2}^{-1} \gamma_1 ,\ \ \gamma_1 t_{e_2'} =
t_{e_2'}^{-1} \gamma_1;
$$
$$
\gamma_2 t_{e_3} = t_{e_3}^{-1} \gamma_2, \ \ \gamma_2 t_{e_3'} =
t_{e_3'}^{-1} \gamma_2;
$$
$$
\gamma_3 t_{e_1} = t_{e_1}^{-1} \gamma_3, \ \ \gamma_3 t_{e_1'} =
t_{e_1'}^{-1} \gamma_3.
$$

This implies that $2e_1, 2e_1',2e_2,2e_2',2e_3,2e_3' \in [\Gamma,
\Gamma]$.
Moreover,
$$
\gamma_1\gamma_2
\begin{pmatrix}
z_1 \\
z_2\\
z_3
\end{pmatrix} =
\begin{pmatrix}
z_1 + \frac{e_1}{2} \\
-z_2 - \frac{e_2}{2}\\
-z_3
\end{pmatrix}
= t_{e_2}^{-1}\gamma_2\gamma_1,
$$
whence $e_2 \in [\Gamma, \Gamma]$. Similarly, we see that (as the respective
commutators of
    $\gamma_1$ with $ \gamma_3$, $\gamma_2$ with $\gamma_3$) $e_1,e_3 
\in [\Gamma,
\Gamma]$.

Therefore $\Gamma' :=
\Gamma / \langle e_1,e_2,e_3,2e_1',2e_2', 2e_3' \rangle$ surjects
onto $\Gamma^{ab}$.

But $\Gamma'$ is already
abelian, since the morphism
$$
\Gamma' \ra (\ZZ /2 \ZZ)^3 \oplus (\ZZ /2 \ZZ)^3,
$$
    mapping the residue classes of $\gamma_1, \gamma_2, \gamma_3,
e_1',e_2',e_3'$ onto the ordered set of coordinate vectors
of $(\ZZ /2 \ZZ)^3$ is easily seen to be well defined and an
isomorphism. This shows that $H_1(S, \ZZ) =
(\ZZ / 2\ZZ)^6$.

In order to prove the assertions $ii) - v)$
observe preliminarly that, if $X$ is the above singular model of $S$, 
then, by van
Kampen's theorem, $\pi_1(S) \cong \pi_1(X)$.
Therefore, for the remaining cases, it suffices to calculate $\pi_1(X)$.

Let $X$ be the above singular model of a Burniat surface with $K^2 \leq 5$.
Consider the
$G^2 \cong (\ZZ / 2 \ZZ)^3$-Galois cover $\hat{X}$.

Since the singularities of $\hat{X}$ are only nodes,
$\pi_1(\hat{X})  \cong \ZZ^6$ by the theorem of
Brieskorn-Tyurina (cf. \cite{brieskorn1}, \cite{brieskorn2}, \cite{tjurina}).

By \cite{armstrong1}, \cite{armstrong2} $\pi_1(X) \cong \Gamma / 
\Tors(\Gamma)$, where
$\Tors(\Gamma)$ is the normal
subgroup of $\Gamma$ generated by all elements of $\Gamma$ having
fixed points on the universal covering $\tilde{X}$ of
$\hat{X}$ (which is, as we have seen before, a $\Gamma$-invariant
hpersurface in $\CC^3$).

Note that the elements in $G^2 \cong (\ZZ / 2 \ZZ)^3$ induced by the elements
$$
\gamma_1,\gamma_2,\gamma_3,\gamma_1\gamma_2,\gamma_1\gamma_3,
\gamma_2\gamma_3 \in \Gamma
$$
do not have fixed points on $\sE_1 \times \sE_2 \times \sE_3$. Instead,
$$
\gamma_1 \gamma_2 \gamma_3\begin{pmatrix}
z_1 \\
z_2\\
z_3
\end{pmatrix} =
\begin{pmatrix}
-z_1 + \frac{e_1}{2} \\
-z_2 - \frac{e_2}{2}\\
-z_3-\frac{e_3}{2}
\end{pmatrix}
$$
has as fixed points the $64$ points on $\sE_1 \times \sE_2 \times
\sE_3$  corresponding to vectors in $\CC^3$ such that
\begin{equation}\label{fp}
2z_i \equiv  \frac{e_i}{2} \mod \Lambda_i , \forall i.
\end{equation}
Equivalently,
\begin{equation}\label{fp2}
z_i \equiv  \frac{e_i}{4} \mod \frac12 \Lambda_i , \forall i.
\end{equation}
ii) Let $X$ be the singular model of a Burniat surface with $K^2 =
5$. Then $\hat{X}$ has $4$ nodes (lying over the point
$P_4
\in
\PP^2$, see figure \ref{configs}).

We observed that if  $\gamma \in \Gamma$
has a fixed point on $\tilde{X}$, then there is a
$$\lambda \in \ZZ^6 \cong \langle e_1,e_2,e_3,e_1',e_2',e_3' \rangle
=: \Lambda$$
  such that
$$\gamma = \gamma_1
\gamma_2 \gamma_3 t_{\lambda}.
$$

  Let now $z =(z_1,z_2,z_3) \in \tilde{X}
\subset \CC^3$. Then $z$ yields a fixed point of
$\gamma_1 \gamma_2 \gamma_3$ on $\hat{X}$ if and only if there is a
$\hat{\lambda} \in \Lambda$  such that
$$
2\begin{pmatrix}
z_1 \\
z_2\\
z_3
\end{pmatrix}
=
\frac{1}{2}\begin{pmatrix}
e_1 \\
-e_2\\
-e_3
\end{pmatrix} + \hat{\lambda} \iff z = \frac{1}{4} \epsilon +
\frac{\hat{\lambda}}{2},
$$
where we have set $\epsilon := \begin{pmatrix}
e_1 \\
-e_2\\
-e_3
\end{pmatrix}$.

We show now that $z$ is a fixed point of a $\ga$ as above iff 
$\lambda = - \hat{\lambda}$.

In fact:

$$
  \gamma(z) = \gamma_1 \gamma_2 \gamma_3 t_{\lambda} (z) = -(z+\lambda)
+ \frac{1}{2} \epsilon =
$$
$$
-\frac{1}{4} \epsilon - \frac{\hat{\lambda}}{2} - \lambda +
\frac{1}{2} \epsilon = z - \lambda - \hat{\lambda}.
$$

Modifying $z$ modulo
$\Lambda$, we replace $z$ by $ z + \la'$, and the corresponding
$\hat{\lambda}$ gets replaced by $\hat{\lambda} + 2 \la'$; hence we see that
$\gamma_1 \gamma_2 \gamma_3 t_{\lambda}$ has a fixed point on $\tilde{X}$ for
all $\lambda \in -\hat{\lambda} +
2\Lambda$.

Therefore $2 \Lambda$ is contained in
$\Tors(\Gamma)$. Since the above arguments
apply to all remaining cases (ii) - (v) we summarize what we have
seen in  the following
\begin{lemma}\label{exseq}
If $\Gamma$ has a fixed point $z$ on the universal covering of
$\hat{X}$ (i.e., we are in one of the cases ii) - v)), then
$\pi_1(X)$ is a quotient  of $\bar{\Gamma} := \Gamma /
2 \Lambda$.

We have thus an exact sequence
$$
1 \ra (\ZZ/2\ZZ)^6 \ra \bar{\Gamma} \ra (\ZZ/2\ZZ)^3 \ra 1.
$$

\end{lemma}
In particular, we already showed that the fundamental group of a 
Burniat surface with $K^2
\leq 5$ is finite: we are now going to write its structure explicitly.

\begin{rema}\label{wichtig}
1) The images of $e_i, e_j'$, $i,j \in \{1,2,3 \}$  in
$\bar{\Gamma}$ are contained in the center of
$\bar{\Gamma}$, i.e., the above exact sequence yields a central extension.

\noindent
2) Note that over each $(1,1,1)$ point of the branch divisor $D \subset
\PP^2$ there are $4$ nodes of $\hat{X}$, which are
a $G^2$-orbit of fixed points of $\gamma_1 \gamma_2\gamma_3$ on
$\hat{X}$.
Let $z \in \tilde{X}$ induce  a fixed point of $\gamma_1 \gamma_2\gamma_3$ on
$\hat{X}$: then $z = \frac{1}{4} \epsilon +
\frac{\hat{\lambda}_z}{2}$, and the other 3 fixed points in the orbit 
are exactly
the points induced by  $\gamma_i(z)$
on $\hat{X}$, for $ i=1,2,3$. We have:

$$
\gamma_1(z) = \gamma_1 \begin{pmatrix}
                         \frac{e_1}{4} + \frac{1}{2} (\hat{\lambda}_z)_1 \\
\frac{-e_2}{4} - \frac{1}{2} (\hat{\lambda}_z)_2 \\
\frac{-e_3}{4} + \frac{1}{2} (\hat{\lambda}_z)_3
                        \end{pmatrix} = \frac{1}{4} \epsilon + 
\frac{\hat{\lambda}_{\gamma_1(z)}}{2}.
$$

This implies that
$$
\hat{\lambda}_{\gamma_1(z)} \equiv \hat{\lambda}_z + \begin{pmatrix}
                                                       e_1 \\e_2\\ 0
                                                      \end{pmatrix} 
\mod 2\Lambda,
$$
and similarly
$$
\hat{\lambda}_{\gamma_2(z)} \equiv \hat{\lambda}_z + \begin{pmatrix}
                                                       0 \\e_2\\ e_3
                                                      \end{pmatrix} 
\mod 2\Lambda, \
\hat{\lambda}_{\gamma_3(z)} \equiv \hat{\lambda}_z + \begin{pmatrix}
                                                       e_1 \\0\\ e_3
                                                      \end{pmatrix} 
\mod 2\Lambda.
$$

\noindent
3) Let $X$ be the singular model of a Burniat surface, and  choose w.l.o.g.
one of the points in $\tilde{X}$ over the $(1,1,1)$ - point $P_4$ to 
be $z:=\frac{1}{4} \epsilon$.
This is equivalent to
$\hat{\lambda}_z = 0$. For each $(1,1,1)$ point of the branch
divisor $D \subset \PP^1$ choose one singular point of $\hat{X}$
lying over it.

Let $\mathcal{S} := \{z(4) = z,
\ldots, z(9-K^2) \}$ be a choice of representatives for
each $G^2$-orbit of points of $\hat{X}$ lying
over the respective $(1,1,1)$ - points. Then:
$$
\pi_1(X) = \Gamma / \langle \gamma_1
\gamma_2
\gamma_3 t_{\lambda} : \lambda \in -\hat{\lambda}_z + 2\Lambda, \ z
\in \mathcal{S} \rangle .
$$
In particular, we have the relations: $$\gamma_1 \gamma_2 \gamma_3 = 
1,$$ and, by 2) :
$$e_1 = e_2 = e_3.$$
\end{rema}

Recall  now that $\gamma_i^2 = e_i$. Therefore  in $\pi_1(X)$ we 
have: $$\gamma_1^2 =
\gamma_2^2 = \gamma_3^2 = e_1 + e_2 + e_3.$$

Thus we get an exact sequence (cf. lemma
\ref{exseq}):
$$
1 \ra (\ZZ/2\ZZ)^3 \oplus (\ZZ/2\ZZ) \ra \pi_1(X) \ra (\ZZ/2\ZZ)^2 \ra 1,
$$

where the map $\varphi:\pi_1(X) \ra (\ZZ/2\ZZ)^2$ is given by 
$\gamma_1 \mapsto (1,0)$,
$\gamma_2 \mapsto (0,1)$, $e_i' \mapsto 0$.
This immediately shows that the kernel of $\varphi$ is equal to
$\langle e_1', e_2' e_3', e_1 + e_2 + e_3 = \gamma_i^2 \rangle$.

Let $\mathbb{H} := \{ \pm 1, \pm i, \pm j, \pm k \}$ be the
quaternion group, and let $\ga_1, \ga_2, \ga_3$ correspond
respectively to $i, j, -k$: then  we obtain an isomorphism
$$
\pi_1(X) \cong
\mathbb{H} \oplus
(\ZZ / 2 \ZZ)^3.
$$
This
proves the assertion on the fundamental group for Burniat surfaces
with $K^2 = 5$. That $H_1(S, \ZZ) = (\ZZ / 2
\ZZ)^5$ follows, since $\mathbb{H}^{ab} = (\ZZ / 2
\ZZ)^2$.

iii), iv) First observe that, by the above, if $X$ is the singular
model of a Burniat surfaces with $K^2 \leq 5$, then
$\pi_1(X)$ is the quotient of $\mathbb{H} \oplus
(\ZZ / 2 \ZZ)^3$ by the relations $\hat{\lambda}_{z(i)} 
-\hat{\lambda}_{z(j)} = 0$, where
$z(i)\neq z(j) \in \mathcal{S}$.

Note that for
$K^2 =4$ (nodal or non nodal), the projections of $z(4)$ and $z(5)$ 
to  $\sE_2$, resp. $\sE_3$,
are points whose differences  are
non trivial $2$-torsion elements. Since each of the corresponding
$(1,1,1)$ points lies on two different lines $D_{2,2}, \
D_{2,3}$, respectively $D_{3,2}, \
D_{3,3}$,
hence the images of $z(4)$ and $z(5)$ under the composition
of the projection
to  $\sE_2$ (resp. to $\sE_3$) with the quotient map $\sE_2
\ra \PP^1$ (resp. $\sE_3 \ra \PP^1$) have different
$x_2$-value (resp. $x_3$-value).

\noindent
{\bf Claim.} The image of $\hat{\lambda}_{z(4)} -\hat{\lambda}_{z(5)}$
in $\oplus_{i=1}^3 e_i'\ZZ / 2 \ZZ$ is non zero.

{\em Proof of the claim}.

Again, we look at the image of $z(4)$ (resp.$z(5)$)  in $\sE_2 \ra
\PP^1$ (with coordinate of $\PP^1$ equal to $x_2$).
We have seen that the corresponding
$(1,1,1)$ points
$P_4$,
$P_5$ lie on two different lines $D_{2,2}, \
D_{2,3}$, respectively $D_{3,2}, \
D_{3,3}$; hence   the  respective $x_2$ coordinates of the projection of
$z(4)$ and $z(5)$ to $\PP^1$ are different.

We conclude since  the description of the transformations of order $2$
of $\sE_2$ given by translation by 2-torsion elements (cf.section 
\ref{legendre}) shows that
translation by $\frac{e_2}{2}$ is the only one which leaves the $x_2$ 
coordinate invariant.

\hfill{\em QED for the claim}.

\smallskip

Therefore, if $X$ is the singular model of a Burniat surface with $K^2 =4$,
$\pi_1(X)$ is the quotient of  $\mathbb{H} \oplus
(\ZZ / 2 \ZZ)^3$ by an element having a non trivial component in 
$(\ZZ / 2 \ZZ)^3$,
hence $\pi_1(X) \cong \mathbb{H} \oplus
(\ZZ / 2 \ZZ)^2$.

Assume now that  $X$ is the singular model of a Burniat surface
with $K^2 =3$. Here the branch divisor on $\PP^2$
has three $(1,1,1)$-points. Repeating the above argument, we see that
  $\pi_1(X)$ is the quotient of $\mathbb{H} \oplus
(\ZZ / 2 \ZZ)^3$ by $\hat{\lambda}_{z(4)} -\hat{\lambda}_{z(5)}$ and
$\hat{\lambda}_{z(4)} -\hat{\lambda}_{z(6)}$.

As
above, we look at the image in $\oplus_{i=1}^3 e_i'\ZZ / 2 \ZZ$ and see
that they give (up to a permutation of indices) the
elements $\begin{pmatrix}
0\\
1\\
1
\end{pmatrix}$, $\begin{pmatrix}
1\\
0\\
1
\end{pmatrix}$. This implies that $\pi_1(X)$ is the quotient of
$\mathbb{H} \oplus
(\ZZ / 2 \ZZ)^3$ by two linear independent relations in $(\ZZ / 2
\ZZ)^3$. Therefore $\pi_1(X) = \mathbb{H} \oplus
(\ZZ / 2 \ZZ)$.

v) Let $X$ be the singular model of a Burniat surfaces with $K^2 =2$.
\begin{rema}
Observe that, by \cite{miles}, \cite{milesLNM}, \cite{miyaoka},
$|\pi_1(X)| \leq 9$. Since $\pi_1(X)$ is a quotient of $\mathbb{H} \oplus
(\ZZ / 2 \ZZ)$ by a relation coming from an element of $\Lambda$ 
there are only two possibilities: either
$\pi_1(X) =
\mathbb{H}$ or $\pi_1(X) = (\ZZ / 2 \ZZ)^3$.
\end{rema}

We are going to show that the second alternative holds.

Here the branch divisor $D$ on $\PP^2$ has four
$(1,1,1)$-points $P_4, P_5,P_6,P_7$. As above, $\pi_1(X)$ is the 
quotient of $\mathbb{H} \oplus
(\ZZ / 2 \ZZ)^3$ by the relations:
$$\hat{\lambda}_{z(4)} -\hat{\lambda}_{z(5)} = 0, \
\hat{\lambda}_{z(4)} -\hat{\lambda}_{z(6)}=0, \ \hat{\lambda}_{z(4)} 
-\hat{\lambda}_{z(7)}=0,
$$
where $z(j) \in \hat{X}$ is a point lying over $P_{j}$.

Looking at figure \ref{configs} of the configuration of lines in 
$\PP^2$ for the Burniat surface with
$K^2 = 2$, we see that
$$
P_4, P_5 \in D_{1,2}, \ P_4, P_6 \in D_{2,2}, \ P_4, P_7 \in D_{3,2},
$$
i.e., $P_4, P_5$ lie in the same green line, but in two different red 
and two different black lines,
$P_4, P_6$ lie in the same red line, but in two different green and 
two different black lines,
and $P_4, P_7$ lie in
the same black line, but in two different green and two different red lines.

This means that if we look at the image of $\hat{\lambda}_{z(4)} 
-\hat{\lambda}_{z(5)}$,
$\hat{\lambda}_{z(4)} -\hat{\lambda}_{z(6)}$,  $\hat{\lambda}_{z(4)} 
-\hat{\lambda}_{z(7)}$ in
  $\oplus_{i=1}^3 e_i'\ZZ / 2 \ZZ$ we see
that they give (up to a permutation of indices) the
elements
$$
\begin{pmatrix}
0\\
1\\
1
\end{pmatrix},  \ \begin{pmatrix}
1\\
0\\
1
\end{pmatrix}, \ \begin{pmatrix}
1\\
1\\
0
\end{pmatrix}.
$$
These three vectors are linearly dependent, hence, taking the 
quotient by these relations, the
  rank of $\oplus_{i=1}^3 e_i'\ZZ / 2 \ZZ$ drops only by two.

In order to determine the component of
the image of $\hat{\lambda}_{z(4)} -\hat{\lambda}_{z(5)}$,
$\hat{\lambda}_{z(4)} -\hat{\lambda}_{z(6)}$,  $\hat{\lambda}_{z(4)} 
-\hat{\lambda}_{z(7)}$ in the center of
the quaternion group, we have to write the points $z(j)$, $j \in 
\{4,5,6,7 \}$ more explicitly, using section
\ref{legendre}.

Observe that in the case $K^2 = 2$, we have $\sE_1 = \sE_2 = \sE_3 =: \sE$.

The fixed points of $\gamma_1
\gamma_2 \gamma_3$ are given by $w_i = 0, \ i= 1,2,3$. Setting $x_i' 
= 1$, and $a:=\sqrt{b}$, we can assume
w.l.o.g. that
$$
z(4) =  \begin{pmatrix}
(1:b) , (1:a), (1:0) \\
(1:b) , (1:a), (1:0) \\
(1:b) , (1:a), (1:0)
\end{pmatrix}.
$$

By equation (\ref{constant}), we have $v_1 v_2 v_3 = c v'_1 v'_2 
v'_3$, whence $c=a^3$.

W.l.o.g., by (2) of remark \ref{wichtig}, we can assume that $z(5)$ is given by
$$
z(5) =  \begin{pmatrix}
(1:b) , (1: \zeta), (1:0) \\
(b:1) , (a:1), (1:0) \\
(b:1) , (a:1), (1:0)
\end{pmatrix} =
\begin{pmatrix}
(1:b) , (1: \zeta), (1:0) \\
f_2((1:b) , (1:a), (1:0) )\\
f_2((1:b) , (1:a), (1:0))
\end{pmatrix}.
$$
We have now to determine $\zeta$ in such a way that $v_1 v_2 v_3 = 
a^3v'_1 v'_2 v'_3$.

For $z(5)$ we have
\begin{equation}\label{aa}
v_1 v_2 v_3 = \zeta = c v'_1 v'_2 v'_3 = a^3 a^2 = a^5.
\end{equation}
Since by section \ref{legendre} the only two translations of order 
$2$ leaving $(x':x)$ unchanged are the identity
and $g_1$, we have
$$
((1:b) , (1: \zeta), (1:0)) = ((1:b) , (1:a), (1:0))
$$ or
$$
((1:b) , (1: \zeta), (1:0)) = g_1((1:b) , (1:a), (1:0)) = ((1:b) , 
(1:-a), (1:0)).
$$
Together with equation (\ref{aa}) we get: $a^5 = \zeta = \pm a$, 
i.e., $a^4 = \pm 1$. $a^4 = 1$ is not possible,
because this would imply that $b =a^2 = \pm1$, a contradiction to $ b 
\neq \frac1b$.

Hence we have that $a^4 =
-1$, i.e.,
$\zeta = a^5 = -a$, and we see that the the relation 
$\hat{\lambda}_{z(4)} -\hat{\lambda}_{z(5)} = 0$
is given by
$$
\langle \begin{pmatrix}
0\\
1\\
1
\end{pmatrix}, 1 \rangle \in \bigoplus_{i=1}^3 e'_i(\ZZ /2 \ZZ) 
\oplus \ZZ/2 \ZZ,
$$

where the last summand is the center of the quaternion group.

Using  for $z(6)$ and $z(7)$ the same argument as for $z(5)$, we get 
two further elements which have to be
set equal to zero in the quotient:
$$
\langle \begin{pmatrix}
1\\
0\\
1
\end{pmatrix}, 1 \rangle, \ \langle \begin{pmatrix}
1\\
1\\
0
\end{pmatrix}, 1 \rangle \in \bigoplus_{i=1}^3 e'_i(\ZZ /2 \ZZ) 
\oplus \ZZ/2 \ZZ.
$$
Taking the sum of these three elements in $\bigoplus_{i=1}^3 e'_i(\ZZ 
/2 \ZZ) \oplus \ZZ/2 \ZZ$ we see that we
get
$$
\langle \begin{pmatrix}
0\\
0\\
0
\end{pmatrix}, 1 \rangle  = 0,
$$
and we have concluded the proof of the theorem.

\end{proof}

\begin{rema}
We shall show in the last section how Burniat surfaces with $K^2 = 2$ are
{\em classical} Campedelli surfaces, i.e.,
obtained as the tautological $(\ZZ /2 \ZZ)^3$ Galois covering of 
$\PP^2$ branched on seven lines.
\end{rema}

\section{The moduli space of primary Burniat surfaces }\label{moduli}

    In  this section we finally devote ourselves to the main result of 
the paper.
First of all, we show

\begin{theo}\label{locmod}
The subset  of the Gieseker moduli space corresponding to
primary Burniat surfaces is an irreducible connected component,
normal, unirational and   of dimension
equal to 4.
\end{theo}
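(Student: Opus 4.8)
The plan is to prove the four assertions --- that the primary Burniat locus $U$ in the Gieseker moduli space is open, closed, irreducible, and that it has smooth (hence normal) local moduli of dimension $4$ --- by combining the deformation theory of the Inoue model with Theorem \ref{homotopy1}. I would first set up a family and read off the dimension and unirationality. By Section \ref{burniateqinoue} a primary Burniat surface is $S = \hat X / G^2$, where $G^2 = (\ZZ/2\ZZ)^3$ acts freely and $\hat X \subset A := \sE_1 \times \sE_2 \times \sE_3$ is a smooth $G^2$-invariant divisor of multidegree $(2,2,2)$. Such data is parametrized by the three elliptic curves $\sE_i$ together with a point of the linear system of invariant divisors. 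Building the corresponding family over this base, and noting that the base is rational (an open subset of a product of moduli of elliptic curves with a projective space of sections), gives at once that $U$, its image in moduli, is unirational.

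Next, openness and the dimension count via deformation theory. Since $\hat X \to S$ is \'etale Galois with group $G^2$, one has $H^1(S, T_S) = H^1(\hat X, T_{\hat X})^{G^2}$. Using the normal bundle sequence $0 \to T_{\hat X} \to T_A|_{\hat X} \to \hol_{\hat X}(2,2,2) \to 0$ together with $T_A \cong \hol_A^{\oplus 3}$, I would split the computation into the invariant deformations of $A$ and the invariant sections of $\hol_A(2,2,2)$. The $G^2$-action on $H^1(A, T_A) = H^0(T_A) \otimes H^1(\hol_A)$ leaves invariant exactly the three diagonal classes $\partial_{z_i} \otimes d\bar z_i$, giving the three moduli of the $\sE_i$; and the eigenspace decomposition of Lemma \ref{h++}, applied on each factor, shows that $H^0(A, \hol_A(2,2,2))^{G^2}$ is two-dimensional (spanned by the $(+{+}{+})$ and $(-{-}{-})$ products), so the invariant linear system contributes one further modulus. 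Hence $h^1(S, T_S) = 3 + 1 = 4$. Since the family already realizes all four first-order deformations, its Kodaira--Spencer map is an isomorphism; the family is therefore versal, the Kuranishi space is smooth of dimension $4$, and $U$ is open with smooth local moduli. Normality then follows, since locally the moduli space is the quotient of this smooth base by the finite automorphism group of $S$.

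The step I expect to be the main obstacle is closedness, where I would invoke Theorem \ref{homotopy1}. Let $[S_0]$ lie in the closure $\bar U$; then $S_0$ is of general type with $K^2 = 6$ and $p_g = q = 0$, arising as a limit of primary Burniat surfaces. The key point is that the oriented homotopy type is constant along the family: over a punctured disk the smooth fibers are mutually diffeomorphic by Ehresmann, and a possibly singular central fiber acquires only rational double points, which after a finite base change admit a simultaneous resolution (Brieskorn--Tyurina), so that the resolved central fiber is again diffeomorphic to the nearby Burniat fibers. Thus the relevant smooth minimal model of $S_0$ is homotopy equivalent to a primary Burniat surface, whence by Theorem \ref{homotopy1} it is a Burniat surface; having $K^2 = 6$ it is a primary Burniat surface, in particular smooth, so $S_0 \in U$. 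This shows $U$ is closed.

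Finally I would assemble the pieces. The locus $U$ is both open and closed in the Gieseker moduli space, hence a union of connected components; being the image of an irreducible rational parameter space it is irreducible, so it is a single connected component. Combined with the smoothness of the Kuranishi space and the normality established above, and with $\dim U = 4$, this yields all assertions of the theorem. The delicate point throughout is the control of the homotopy type across the boundary of $U$, which is exactly what Theorem \ref{homotopy1} is designed to supply.
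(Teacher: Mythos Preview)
Your proposal is correct and follows essentially the same strategy as the paper: both use the Inoue description $S=\hat X/G^2$ with $\hat X\subset T=\sE_1\times\sE_2\times\sE_3$, analyze deformations via the normal bundle sequence of $\hat X$ in $T$, compute the $G^2$-invariant linear system as $\PP^1$ using Lemma~\ref{h++}, read off the three remaining moduli from the elliptic curves, and invoke Theorem~\ref{homotopy1} for closedness.

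There are only minor differences of emphasis. The paper first proves that the Kuranishi family of $\hat X$ itself is smooth (by showing the map $H^1(\Theta_T\otimes\hol_{\hat X})\to H^1(\hol_{\hat X}(\hat X))$ is surjective, interpreting it as contraction with the nondegenerate class $c_1(\hat X)$), and then passes to $G^2$-invariants; you instead compute $H^1(S,T_S)=H^1(\hat X,T_{\hat X})^{G^2}$ directly. Your count $3+1$ is correct, but note that it implicitly uses $H^1(\hol_{\hat X}(\hat X))^{G^2}=0$ (each $d\bar z_i\wedge d\bar z_j$ carries a nontrivial character), which is what makes the invariant sequence give exactly $4$ rather than just an upper bound; you might make this step explicit. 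Conversely, your closedness argument spells out what the paper leaves implicit, namely that constancy of the homotopy type across a degeneration in the Gieseker moduli space follows from simultaneous resolution of the rational double points on the central fibre.
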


This result was already proven in \cite{mlp} using the fact that the
bicanonical map of the canonical model $X'$ of a Burniat surface
is exactly the bidouble covering $ X' \ra Y'$ onto  the normal Del Pezzo
surface $Y'$ of degree $K^2_{X'}$
obtained as the anticanonical model of
the weak Del Pezzo surface obtained blowing up not only the points 
$P_1, P_2, P_3$, but also all
the other triple points of $D$.

We shall now give an alternative proof of their theorem.

\medskip

\begin{proof}
The singular model $X$ of a  primary Burniat surface is smooth,
and has ample canonical divisor.
Hence it equals the minimal model $S $ (and the canonical model $X'$).

  Since $ \hat{X} \ra X$ is \'etale with
group $G^2$, it suffices to show that
the Kuranishi family of $\hat{X}$ is smooth. Then it will also
follow that the Kuranishi family of $X$ is smooth, whence the Gieseker
moduli space is normal (being locally analytically isomorphic to the
quotient of the base of the Kuranishi family by the finite group
$ \Aut(X)$).

Since $\hat{X} \subset \sE_1 \times \sE_2 \times \sE_3$ is a smooth
hypersurface, setting for convenience
$ T : = \sE_1 \times \sE_2 \times \sE_3$, we have the tangent bundle sequence
$$ 0 \ra  \Theta_{ \hat{X}}  \ra \Theta_T \otimes \hol_{ \hat{X}}
\cong \hol_{ \hat{X}}^3 \ra
\hol_{ \hat{X}}( \hat{X}) \ra 0 $$
with exact cohomology sequence

$$0 \ra  \CC^3 \ra H^0 (\hol_{ \hat{X}}( \hat{X}))\cong \CC^{10} \ra $$
$$ \ra H^1 (\Theta_{ \hat{X}}) \ra  H^1 (\Theta_T
\otimes
\hol_{
\hat{X}}) \cong \CC^9
\ra   H^1 (\hol_{ \hat{X}}(
\hat{X})) \cong \CC^3.
$$

Since  $\hat{X} $ moves in a smooth family of dimension $ 13= 6 + 7$,
a fibre bundle over the family
of  deformations of the
principally  polarized Abelian variety $T$, with fibre the linear
system  $\PP( H^0 ( T, \hol_T (\hat{X}))) $,
it suffices to show that the map $H^1 (\Theta_T \otimes
\hol_{\hat{X}})\ra   H^1 (\hol_{ \hat{X}}(\hat{X}))$ is surjective.

It suffices to observe that $H^1 (\Theta_T \otimes
\hol_{\hat{X}}) \cong H^1 (\Theta_T) $, $H^1 (\hol_{
\hat{X}}(\hat{X})) \cong H^2 (\hol_T)$,
and that, as well known, the above map corresponds via these isomorphisms
to the contraction with
the first Chern class of $\hat{X}$, an element of $ H^1 (\Omega^1_T)$
which represents a non degenerate
alternating form. Whence surjectivity follows.

Thus the base of the Kuranishi family of $\hat{X}$ is smooth 
(moreover the Kodaira Spencer
map of the above family
is a bijection, but we omit the verification here), whence the base 
of the Kuranishi
family of $X$, which is the $G^2$-invariant
part of the base of the Kuranishi family  of $\hat{X}$, is also smooth.

Moreover  the Kuranishi family of $X$ fibres onto the family of
$G^2$-invariant  deformations of $T$, which coincides
    with the
deformations of the three individual elliptic curves.

  The fibres of the corresponding morphism between the bases
of the respective families are
given by the $G^2$-invariant part of the
linear system $|\hat{X} |$, which we are going to calculate
explicitly as being isomorphic to $\PP^1$.

We obtain thereby  a rational family of dimension 4  parametrizing the
primary Burniat
surfaces. This proves the  unirationality of the  4 dimensional
irreducible component.

That the irreducible component of the moduli space is in fact a
connected component follows from the
more general result below (theorem \ref{homotopy}).

We  calculate now
$$
H^0(\sE_1 \times \sE_2 \times \sE_3, p_1^*\hol_{\sE_1}([o_1] +
[\frac{e_1}{2}]) \otimes p_2^*\hol_{\sE_2}
([o_2] + [\frac{e_2}{2}])\otimes p_3^*\hol_{\sE_3}([o_3] +
[\frac{e_3}{2}]))^{G^2},
$$
where $p_i: \sE_1 \times \sE_2 \times \sE_3 \rightarrow \sE_i$ is the
$i$ - th projection.

   From lemma \ref{h++} it follows that

\begin{multline*}
    H_1:= H^0(\sE_1, \hol_{\sE_1}([o_1] + [\frac{e_1}{2}])) = H_1^{+++}
\oplus H_1^{-+-}=\\
= H^0(\sE_1, \hol_{\sE_1}([o_1] + [\frac{e_1}{2}]))^{+++} \oplus
H^0(\sE_1, \hol_{\sE_1}([o_1] + [\frac{e_1}{2}]))^{-+-},
\end{multline*}

\begin{multline*}
    H_2:= H^0(\sE_2, \hol_{\sE_2}([o_2] + [\frac{e_2}{2}])) = H_2^{+++}
\oplus H_2^{--+}=\\
= H^0(\sE_2, \hol_{\sE_2}([o_2] + [\frac{e_2}{2}]))^{+++} \oplus
H^0(\sE_2, \hol_{\sE_2}([o_2] + [\frac{e_2}{2}]))^{--+},
\end{multline*}

\begin{multline*}
    H_3:= H^0(\sE_3, \hol_{\sE_3}([o_3] + [\frac{e_3}{2}])) = H_3^{+++}
\oplus H_3^{+--}\\
= H^0(\sE_3, \hol_{\sE_3}([o_3] + [\frac{e_3}{2}]))^{+++} \oplus
H^0(\sE_3, \hol_{\sE_3}([o_3] + [\frac{e_3}{2}]))^{+--}.
\end{multline*}
As a consequence of this, we get

\begin{multline*}
H^0(\sE_1 \times \sE_2 \times \sE_3, p_1^*\hol_{\sE_1}([o_1] + [\frac{e_1}{2}])
\otimes p_2^*\hol_{\sE_2}([o_2] + [\frac{e_2}{2}])\otimes
p_3^*\hol_{\sE_3}([o_3] + [\frac{e_3}{2}]))^{G^2} = \\
(H_1^{+++} \oplus H_2^{+++} \oplus H_3^{+++}) \oplus (H_1^{-+-}
\oplus H_2^{--+} \oplus H_3^{+--}) \cong \CC^2.
    \end{multline*}

\end{proof}

We have obtained a 4 - dimensional rational family parametrizing  all the
primary Burniat surfaces.

This can also be  seen in a more direct fashion by the fact that,
  fixing 4 points in $\PP^2$ in general position,  we
can fix the 3 lines $ D_{i,1}, i=1,2,3$ and 2 lines $ D_{1,2},  D_{2,2}$.
Then the other 4 lines vary each in a pencil, hence we
get 4 moduli.

    In the remaining part of this section, we will prove the following result:

\begin{theo}\label{homotopy}
    Let $S$ be a smooth complex projective surface which is
homotopically equivalent to a primary Burniat surface.
Then $S$ is a Burniat surface.

\end{theo}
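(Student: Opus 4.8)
The plan is to use the rigidity built into the group $\Gamma$: pass to the canonical étale $(\ZZ/2\ZZ)^3$-cover, identify it via its Albanese map with a $G$-invariant $(2,2,2)$-hypersurface in a product of three elliptic curves, and then invoke the identification of Burniat with Inoue surfaces of section~\ref{burniateqinoue}. First I would extract from the homotopy equivalence $S\simeq S_0$ with a primary Burniat surface $S_0$ the isomorphism $\pi_1(S)\cong\Gamma$ together with the numerical data: $b_1(S)=0$ (so $q(S)=0$), $e(S)=6$ and $\sigma(S)=-2$, whence $c_1^2(S)=3\sigma+2e=6$ and $\chi(\hol_S)=1$, i.e. $p_g(S)=0$, $K_S^2=6$; since $\Gamma$ is infinite, $S$ is of general type. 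The lattice $\Lambda=\Lambda_1\oplus\Lambda_2\oplus\Lambda_3\cong\ZZ^6$ is normal in $\Gamma$ with $\Gamma/\Lambda\cong G:=(\ZZ/2\ZZ)^3$, and the associated connected étale $G$-cover $\hat S\to S$ has $\pi_1(\hat S)\cong\Lambda\cong\ZZ^6$, hence $q(\hat S)=3$, $K_{\hat S}^2=48$, $\chi(\hol_{\hat S})=8$.

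Next I would study the Albanese map $\alpha\colon \hat S\to A:=\Alb(\hat S)$, an abelian threefold, which is equivariant for the free deck action of $G$. The conjugation action of $\Gamma$ on $\Lambda$, read off from the commutation relations (a), (b), (c) of section~\ref{fundgroup}, makes each $\gamma_i$ negate exactly one of the summands $\Lambda_j$; consequently the induced $G$-representation on $H^0(\hat S,\Omega^1)=H^0(A,\Omega^1)$ splits into three one-dimensional eigenspaces affording three distinct nontrivial characters. Because the complex structure on $A=(\Lambda\otimes\RR)/\Lambda$ is $G$-equivariant and cannot mix distinct characters, $A$ splits $G$-equivariantly as $\sE_1\times\sE_2\times\sE_3$, with each $\gamma_i$ acting on the factors by the above sign pattern together with translation by the $2$-torsion point $e_i/2$ on the $i$-th factor. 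This reproduces precisely the Inoue $G$-action of section~\ref{burniateqinoue}.

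Since the three pulled-back holomorphic $1$-forms are independent, $\alpha$ is generically finite onto a $G$-invariant surface $V\subset A$; write $d=\deg(\alpha\colon\hat S\to V)$ and let $(a,b,c)$ be the class of $V$. The key step is to pin down $d=1$ and $(a,b,c)=(2,2,2)$. The pairing $\int_{\hat S}\alpha^*\beta\cup\alpha^*\beta'$ is a homotopy invariant, so the image class $\alpha_*[\hat S]=d\,[V]$ agrees with its value on $\hat S_0$, which is the smooth $(2,2,2)$-divisor of section~\ref{burniateqinoue}; thus $d\,(a,b,c)=(2,2,2)$. It then remains to exclude the alternative $d=2$, $(a,b,c)=(1,1,1)$: here I would play off the invariants $K_{\hat S}^2=48$, $\chi(\hol_{\hat S})=8$, $q(\hat S)=3$ against those forced on a double cover of a smooth $(1,1,1)$-divisor, use that $\alpha_*\colon\pi_1(\hat S)\to\pi_1(V)$ is an isomorphism (Lefschetz for the ample $V$), and note that a $G$-semi-invariant section cutting out $V$ must live in the eigenspace decomposition of Lemma~\ref{h++}, whose $(2,2,2)$-invariant part was shown to be a pencil in Theorem~\ref{locmod}. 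Once $d=1$ and the class is $(2,2,2)$, the map $\alpha$ is birational; a nodal $V$ would make $S$ a secondary or tertiary Burniat configuration with $K_S^2<6$, contradicting $K_S^2=6$, so $V$ is smooth, and $K_{\hat S}^2=48=V^3$ forces $\alpha$ to be an isomorphism $\hat S\cong V$.

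Thus $\hat S$ is a smooth $G$-invariant $(2,2,2)$-hypersurface in $\sE_1\times\sE_2\times\sE_3$, i.e. the pullback of a Del Pezzo surface $Z_c$ as in (\ref{constant}); therefore $S=\hat S/G$ is an Inoue surface, and by the theorem of section~\ref{burniateqinoue} it is a Burniat surface. I expect the \emph{main obstacle} to be the third step: controlling the degree and the divisor class of the Albanese map and ruling out the degenerate $(1,1,1)$ double-cover scenario. The divisor class is not a holomorphic invariant, so it must be recovered by combining the homotopy-invariant intersection data with the holomorphic $G$-representation on $H^0(\Omega^1)$ and on the space of defining sections; making the character bookkeeping of Lemma~\ref{h++} interact correctly with the topological constraints is the delicate part.
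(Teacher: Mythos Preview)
Your overall strategy coincides with the paper's: pass to the \'etale $(\ZZ/2\ZZ)^3$-cover $\hat S$, show its Albanese is a product of three elliptic curves carrying the Inoue $G$-action, prove the Albanese image is a $(2,2,2)$-divisor with $\hat\alpha$ birational, and conclude via the Burniat\,$=$\,Inoue identification. Two points of comparison are worth noting.

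\emph{Splitting of the Albanese.} You obtain the product decomposition $A\cong\sE_1\times\sE_2\times\sE_3$ directly from the $G$-eigenspace decomposition of $H^0(\hat S,\Omega^1)$ (three distinct nontrivial characters, forcing the complex structure to respect $\Lambda=\Lambda_1\oplus\Lambda_2\oplus\Lambda_3$). The paper instead passes through the three intermediate double covers $S_i\to S$ and computes $H_1(S_i,\ZZ)$ to identify $\Alb(S_i)=\sE_i'$. Both are valid; your route is more direct.

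\emph{The degree/class dichotomy.} You correctly isolate the dichotomy $d(a,b,c)=(2,2,2)$, i.e.\ either $d=1,\ (a,b,c)=(2,2,2)$ or $d=2,\ (a,b,c)=(1,1,1)$, and label the second alternative the main obstacle. In fact it evaporates in one line, and the elaborate programme you sketch (comparing invariants with a double cover of a smooth $(1,1,1)$-divisor, invoking Lemma~\ref{h++}, etc.) is unnecessary and would not work as stated: there is \emph{no} smooth irreducible $(1,1,1)$-divisor. On $T=\sE_1\times\sE_2\times\sE_3$ the line bundle of type $(1,1,1)$ is a principal polarisation, so every effective divisor in that numerical class is a translate of the product theta divisor
\[
\{o_1\}\times\sE_2\times\sE_3 \;+\; \sE_1\times\{o_2\}\times\sE_3 \;+\; \sE_1\times\sE_2\times\{o_3\},
\]
hence reducible. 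Since $\hat S$ is connected, its Albanese image is irreducible, and the $(1,1,1)$ case is excluded. (The paper's proof passes over this point without comment, asserting directly that the claim on $\deg(\pi_{ij}\circ\hat\alpha)=2$ forces birationality and multidegree $(2,2,2)$; the same observation is what justifies that step.)

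\emph{Endgame.} Your argument ``nodal $V\Rightarrow K_S^2<6$'' is correct once phrased carefully: if $V$ had nodes then $\hat S$ would be a resolution of a singular Inoue divisor, hence $S$ a resolution of a secondary or tertiary Burniat surface, giving $K_S^2\le 6-m<6$. The paper instead compares geometric genera: from the structure sequence on $T$ one gets $p_g(Y)=10=p_g(\hat S)$, and since $|\omega_Y|$ is base-point-free this forces $Y$ to have at most rational double points; combined with $K_{\hat S}^2=48=Y^3$ one concludes. Either route closes the argument.
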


\begin{proof}

\noindent
Let $S$ be a smooth complex projective surface with $\pi_1(S) =
\Gamma$.

Recall that $\gamma_i^2 = e_i$ for $i = 1,2,3$. Therefore
$\Gamma = \langle \gamma_1, e_1', \gamma_2, e_2', \gamma_3, e_3'
\rangle$ and we have the exact sequence
$$
1 \rightarrow \ZZ^6 \cong \langle e_1, e_1', e_2, e_2',e_3,e_3'
\rangle \rightarrow \Gamma \rightarrow (\ZZ / 2 \ZZ)^3 \rightarrow 1,
$$

where $e_i \mapsto \gamma_i^2$.

If we set $\Lambda_i:= \ZZ e_i \oplus \ZZ e_i'$, $i=1,2,3$ then
$$\pi_1 (\sE_1 \times \sE_2 \times \sE_3) = \Lambda_1
\oplus
\Lambda_2\oplus \Lambda_3.$$

We also have the three lattices $\Lambda_i' := \ZZ \frac{e_i}{2}
\oplus \ZZ e_i'$.

\begin{rema}
    1) $\Gamma$ is a group of affine transformations on $\Lambda_1'
\oplus \Lambda_2' \oplus \Lambda_3'$.

\noindent
2) We have an \'etale double cover $\sE_i = \CC / \Lambda_i
\rightarrow \sE_i' := \CC / \Lambda_i'$, which is the
quotient by the semiperiod $\frac{e_i}{2}$ of $\sE_i$.
\end{rema}

\noindent
$\Gamma$ has the following three subgroups of index two:
$$
\Gamma_3 := \langle \gamma_1, e_1', e_2, e_2',\gamma_3, e_3' \rangle,
$$
$$
\Gamma_1 := \langle \gamma_1, e_1', \gamma_2, e_2',e_3, e_3' \rangle,
$$
$$
\Gamma_2 := \langle e_1, e_1', \gamma_2, e_2',\gamma_3, e_3' \rangle,
$$

\noindent
corresponding to three \'etale double covers of $S$: $S_i \rightarrow
S$, for $i = 1,2,3$.

\begin{lemma}
The Albanese variety of $S_i$ is $\sE'_i$.

In particular, $q(S_1) =
q(S_2) = q(S_3) = 1$.
\end{lemma}

\begin{proof}
Observe once more that
\begin{itemize}
    \item[i)] $\gamma_1$ commutes with $e_1,e_1',e_3,e_3'$;
\item[ii)] $\gamma_2$ commutes with $e_2,e_2',e_1,e_1'$;
\item[iii)] $\gamma_3$ commutes with $e_2,e_2',e_3,e_3'$.
\end{itemize}

Denoting by $t_{e_i}\in \mathbb{A}(3, \CC)$ the translation with
vector  $e_i$,   we see that
$$
\gamma_1 t_{e_2} = t_{e_2}^{-1} \gamma_1 \ \ \gamma_1 t_{e_2'} =
t_{e_2'}^{-1} \gamma_1;
$$
$$
\gamma_3 t_{e_1} = t_{e_1}^{-1} \gamma_3 \ \ \gamma_3 t_{e_1'} =
t_{e_1'}^{-1} \gamma_3.
$$

This implies that $2e_2,2e_2',2e_1,2e_1' \in [\Gamma_3, \Gamma_3]$.

Moreover, $\gamma_1 \gamma_3 = t_{e_1}^{-1}\gamma_3 \gamma_1$, whence
already $e_1 \in [\Gamma_3,
\Gamma_3]$.

Therefore we have a surjective homomorphism
$$
\Gamma_3' := \Gamma_3 / \langle 2e_2, 2e_2',e_1, 2e_1' \rangle =
\Gamma_3 / (2 \ZZ^3 \oplus \ZZ)  \rightarrow
\Gamma_3^{ab} = \Gamma_3 / [\Gamma_3, \Gamma_3].
$$
Since the images of $\gamma_3$ and $e_3'$ are in the centre of
$\Gamma_3'$, we get
that  $\Gamma_3'$ is abelian, hence $ H_1(S_3, \ZZ ) = \Gamma_3^{ab}
= \Gamma_3' $ and
$$
\Gamma_3' = \langle \gamma_3, e_3' \rangle \oplus (\ZZ / 2 \ZZ )^4 =
\ZZ \frac{e_3}{2}
\oplus \ZZ e_3'\oplus (\ZZ / 2
\ZZ )^4 = \Lambda_3 \oplus (\ZZ / 2 \ZZ )^4.
$$

This implies that
$\Alb(S_3) = \CC / \Lambda_3' = \sE'_3$.

\noindent
The same calculation shows that $\Gamma_i^{ab} = H_1(S_i, \ZZ ) =
\Lambda_i' \oplus (\ZZ / 2 \ZZ)^4$, whence
$\Alb(S_i) = \CC / \Lambda_i' = \sE_i'$, also for $i= 2,3$.

\end{proof}

Let now $\hat{S} \rightarrow S$ be the \'etale $(\ZZ / 2 \ZZ)^3$ -
covering associated to $\ZZ^6 \cong \langle e_1, e_1',
e_2, e_2', e_3, e_3'\rangle \triangleleft \Gamma$. Since $\hat{S}
\rightarrow S_i \rightarrow S$, and $S_i$ maps to
$\sE'_i$ (via the Albanese map), we get a morphism
$$
f:\hat{S} \rightarrow \sE'_1 \times \sE'_2 \times \sE'_3 = \CC/
\Lambda_1' \times \CC / \Lambda_2' \times \CC /
\Lambda_3'.
$$
Since the covering of $\sE'_1 \times \sE'_2 \times \sE'_3$ associated to
$\Lambda_1 \oplus \Lambda_2 \oplus \Lambda_3 <  \Lambda_1' \oplus
\Lambda_2' \oplus \Lambda_3'$ is
$\sE_1 \times \sE_2 \times \sE_3$,  we see that $f$ factors through
$\sE_1 \times \sE_2 \times \sE_3$ and the
Albanese map of
$\hat{S}$ is $\hat{\alpha} : \hat{S} \rightarrow \sE_1 \times \sE_2
\times \sE_3$.

\medskip
\noindent
Let $Y:= \hat{\alpha}(\hat{S}) \subset T =\sE_1 \times \sE_2 \times
\sE_3$ be the Albanese image of $\hat{S}$.

\noindent
We consider for $i \neq j \in \{1,2,3\}$, the natural projections
$$
\pi_{ij} : \sE_1 \times \sE_2 \times \sE_3 \rightarrow \sE_i \times \sE_j.
$$

\begin{claim}
If $S$  is homotopically equivalent to a primary Burniat surface,
then for $i \neq j \in \{1,2,3\}$ we have
$\deg (\pi_{ij} \circ \hat{\alpha}) = 2$.
\end{claim}

\begin{proof}
The degree of $\pi_{ij} \circ \hat{\alpha}$ is the index of the
image of $H^4 ( \sE_i \times \sE_j , \ZZ)$
inside   $H^4 ( \hat{S} , \ZZ)$. But the former equals $\wedge ^4 (
\Lambda_i \oplus \Lambda_j)$,
hence we see that this number is an invariant of the cohomology
algebra of $\hat{S}$.

\end{proof}

The above claim implies that $\hat{S} \rightarrow Y$ is a birational
morphism and that $Y \subset Z$ has multidegree
$(2,2,2)$. Thus
$K_{Y} = \hol_{Y}(Y) = \hol_Y(2,2,2)$, and $K_Y^2 = (2,2,2)^3 = 48$.
On the other hand, since $S$ is homotopically
equivalent to a primary Burniat surface, we have that $K_S^2 = 6$,
whence $K_{\hat{S}}^2 = 6 \cdot 2^3 = 48$.

Moreover, we have
$$
p_g(\hat{S}) = q(\hat{S}) + \chi(\hat{S}) -1 = 3 + 8\chi(S) -1 = 10.
$$
The short exact sequence

$$
0 \rightarrow \hol_T \rightarrow \hol_T(Y) \rightarrow \omega_Y \rightarrow 0,
$$

induces a long exact cohomology sequence
\begin{multline*}
    0 \rightarrow H^0(T,\hol_T) \rightarrow H^0(T,\hol_T(Y)) \rightarrow
H^0(Y, \omega_Y) \rightarrow \\
\rightarrow H^1(T,\hol_T) \rightarrow H^1(T,\hol_T(Y)) = 0,
\end{multline*}
where the last equality holds since $Y$ is an ample divisor on $T$.

Moreover $H^0(T,\hol_T) \cong \CC$, and
$H^1(T,\hol_T) \cong \CC^3$, and therefore
$$
p_g(Y) = h^0(Y, \omega_Y) = 10 = p_g(\hat{S}).
$$

Since $|\omega_Y|$ is base point free, and it has the same dimension as
$|\omega_{\hat{S}}|$, this implies that $Y$ has at most rational double
points as singularities. This concludes the proof that $S$ is a
primary Burniat surface.

\end{proof}

\section{The Burniat surface with $K^2 =2$ is a classical Campedelli surface}

The aim of this short last section is to illustrate
  how the Burniat surface with $K^2 = 2$ can be seen as a classical
Campedelli surface (with fundamental group $(\ZZ / 2 \ZZ)^3$).

A classical Campedelli surface can be described as the
tautological $(\ZZ / 2 \ZZ)^3$
Galois-covering of $\PP^2$ branched in seven lines.

This means that each line $\{l_{\alpha} = 0\}$  is set to correspond 
to a non zero element of the Galois group
$(\ZZ / 2 \ZZ)^3$, and then, for each character $\chi \in {(\ZZ / 2 \ZZ)^3}^*$,
we consider the covering given (cf. \cite{pardini}) by
$$ w_{\chi} w_{\chi'}  =  \prod_{\chi(\nu)  = \chi'( \nu)=1 } l_{\nu} 
\ w_{\chi+\chi'}$$
in the vector bundle whose sheaf of sections is
$$\bigoplus _{\chi \in {(\ZZ / 2 \ZZ)^3}^* } \hol_{\PP^2} (1) .$$

As we have seen before, the singular model $X$ of a Burniat surface 
$S$ with $K^2_S = 2$
(i.e., $K^2_{X} = 6$) is the
  $(\ZZ / 2 \ZZ)^2$ Galois covering branched in $9$ lines having 4 
points of type $(1,1,1)$,
  whereas the minimal
model $S$ of a Burniat surface with $K_S^2 = 2$ is the smooth 
bidouble cover of a
weak Del Pezzo surface
  $Y''$ of degree $2$.
Note that the strict transforms of the lines of $D \subset \PP^2$ 
passing through 2 of the points
$P_4,P_5,P_6,P_7$ yield rational $(-2)$-curves on $Y''$. There are 
six of them on $Y''$,
namely $D_{i,j}$ for $1
\leq i \leq 3$, $j \in \{2,3 \}$.

Contracting these six $(-2)$ curves, we obtain a  normal Del Pezzo 
surface $Y'$ of degree 2
having six nodes, and with $-K_{Y'}$ ample.

Then the anticanonical map $\varphi:=\varphi_{|-K_{Y'}|} : Y' \ra \PP^2$
is a finite double
cover branched on a quartic curve, which has 6 nodes (since $Y'$ has 
six nodes).

But a plane quartic having $6$ nodes has to be the union of four 
lines $L_1, L_2, L_3, L_4$ in general position.

Let $S \ra Y''$ be the bidouble cover branched in the Burniat 
configuration yielding a minimal model
  of the Burniat surface with $K_S^2 = 2$. Then the preimages of the
$(-2)$-curves $D_{i,j}$ for $1 \leq i \leq 3$, $j
\in \{2,3 \}$ on $Y$ are rational $(-2)$ curves of $S$.

Let now $X'$ be the canonical model of $S$ and consider the 
composition of the bidouble cover
$\psi:X' \ra Y'$ with $\varphi$.

  Since $\psi$ branches on the image $\Delta$
of $D_{1,1} + D_{2,1} +
D_{3,1}$ in $Y'$ (the other 6 lines being contracted),
we see that the branch divisor of $\varphi \circ \psi$ consists
of
$Q:=L_1 + L_2 + L_3 + L_4$ and
the image of $\Delta$ in $\PP^2$.

Looking at the configuration of the lines (cf. figure \ref{configs}), 
we see that
\begin{itemize}
  \item[i)] $D_{1,1}$ intersects $D_{2,2}$, $D_{2,3}$;
\item[ii)] $D_{2,1}$ intersects $D_{3,2}$, $D_{3,3}$;
\item[iii)] $D_{3,1}$ intersects $D_{1,2}$, $D_{1,3}$.
\end{itemize}
Hence the image of $D_{i,1}$ under $\varphi$ has to intersect two 
nodes of the plane
quartic $Q:=L_1 + L_2 + L_3 + L_4$, which implies that, denoting the
image of $D_{i,1}$ under $\varphi$ by
$L_i'$, the branch divisor of the $(\ZZ / 2\ZZ)^3$ Galois-covering
  $\varphi \circ \psi : X \ra \PP^2$ is a
configuration of seven lines $L_1 + L_2 + L_3 + L_4 + L_1' + L_2' + 
L_3'$, where
$L_1, L_2, L_3, L_4$ are four
lines in general position, i.e., form a complete quadrilateral, and
  $L_1', L_2', L_3'$ are the three diagonals.

The covering $\varphi \circ \psi : X \ra \PP^2$ is a Galois covering
with Galois group $(\ZZ / 2\ZZ)^3$.

In fact we already have
as covering transformations the elements of the Galois group $ G'': = 
(\ZZ / 2\ZZ)^2$
of $\psi$. Moreover the involution $ i : Y' \ra Y'$ can be lifted to $X'$ since
$i$ leaves the individual branch curves invariant (as they are 
inverse image of the diagonals
of the quadrilateral), and also the line bundles associated to the covering
of $Y''$ ($Y''$ is simply connected, whence division by 2 is unique 
in $\Pic (Y'')$).

To show that the covering is the tautological one it suffices  to 
verify that for each
non trivial element of
the Galois group its fixed divisor is exactly the inverse image of 
one of the 7 lines in $\PP^2$.

We omit further details since they are contained in the article 
\cite{kulikov} by Kulikov.

The idea there is simply to take the tautological
cover and observe that it factors as a bidouble cover of
$Y'$ branched on the inverse image of the diagonals, each splitting 
into the divisor corresponding
to the line $D_{i,1}$ and the divisor corresponding to $E_{i+2}$.
Whence Kulikov verifies that one gets in this way the  Burniat 
surface  with $K^2=2$.

\begin{rema}
There are other interesting $(\ZZ / 2\ZZ)^3$-Galois covers of the 
plane branched on the
seven lines $L_1, L_2, L_3, L_4,L_1', L_2', L_3' $.

One such is the fibre product $Z$ of the standard bidouble cover
$\PP^2 \ra \PP^2$ branched on the diagonals $L_1', L_2', L_3'$ with 
the double covering
$Y'$ branched on $L_1, L_2, L_3, L_4$.

This gives $Z$ as a double plane branched on
four conics touching in 12 points. $Z$ is a surface with $K^2_Z = 2$, 
$ p_g (Z) = 3$,
whose singularities are precisely 12 points of type $A_3$.
\end{rema}


\bigskip
\noindent
{\bf
Authors' Adresses:}\\
\noindent
I.Bauer, F. Catanese \\
Lehrstuhl Mathematik VIII,\\
Mathematisches
Institut der Universit\"at Bayreuth\\
NW II\\
Universit\"atsstr. 30\\
95447
Bayreuth

\begin{verbatim}
email: ingrid.bauer@uni-bayreuth.de,
        fabrizio.catanese@uni-bayreuth.de
\end{verbatim}
\end{document}